\theoremstyle{plain}
\newtheorem*{theorem*}{Theorem}
\newtheorem{theorem}{Theorem}
\newtheorem{lemma}{Lemma}
\newtheorem{proposition}[lemma]{Proposition}
\newtheorem{corollary*}[lemma]{Corollary}
\theoremstyle{definition}
\newtheorem{definition}[lemma]{Definition}
\newtheorem{example}[lemma]{Example}
\numberwithin{equation}{section}
\numberwithin{lemma}{section}
\newcommand{\id}[1]{\left\langle#1\right\rangle}
\newcommand{\F}{\mathbb{F}}
\newcommand{\ROne}{{\F_4[v;\sigma][u]\over\id{u^2+v^2,uv}}}
\newcommand{\RTwo}{{\F_2\id{u,v,w}\over\id{\id{u}^2,\id{v}^2,\id{w}^2,uv,vw,wu,uwv+vuw,uwv+wvu}}}
\newcommand{\RThree}{{\F_4[u][v;\psi]\over\id{u^2,v^2}}}
\newcommand{\diaFinite}{~\xymatrixrowsep{1pc}\xymatrixcolsep{1pc}\xymatrix{
&&&\textrm{reflexive}\ar@{-}[dr]\\
&\textrm{commutative}\ar@{=>}[r]\ar@{=>}[dr]&\textrm{symmetric}\ar@{=>}[r]&\textrm{reversible}\ar@{=>}[d]\ar@{=>}[u]&+\ar@{=>}[l]\\
&&\textrm{duo}\ar@{=>}[r]&\textrm{semicommutative}\ar@{=>}[r]\ar@{-}[ur]&\textrm{abelian}\ar@{=>}[r]&\textrm{2-primal}
}}
\newcommand{\diaReflexive}{~\xymatrixrowsep{1pc}\xymatrixcolsep{1pc}\xymatrix{
\textrm{commutative}\ar@{=>}[r]\ar@{=>}[dr]&\textrm{symmetric}\ar@{=>}[r]&\textrm{reversible}\\
&\textrm{duo}\ar@{=>}[r]&\textrm{semicommutative}\ar@{<=>}[u]\ar@{=>}[r]&\textrm{abelian}\ar@{=>}[r]&\textrm{2-primal}
}}
\newcommand{\dia}{~\xymatrixrowsep{1pc}\xymatrixcolsep{1pc}\xymatrix{
&&\textrm{refl.}\ar@{-}[r]&+\ar@{=>}[dl]\\
\textrm{red.}\ar@{=>}[r]\ar@{=>}@/_/[dr]_{finite}&\textrm{symm.}\ar@{=>}[r]&\textrm{rev.}\ar@{=>}[u]\ar@{=>}[r]&\textrm{semicomm.}\ar@{-}[u]\ar@{=>}[r]\ar@{=>}[drr]&\textrm{PS~I}\ar@{=>}[r]&\textrm{2-pmal}\ar@{<=}@/^2pc/[r]^{artinian}\ar@{=>}[r]&\textrm{NI}\ar@{=>}[r]&\textrm{dedekind-finite}\\
&\textrm{comm.}\ar@{=>}[u]\ar@{=>}[r]&\textrm{r.duo}\ar@{=>}[ur]&&&\textrm{abelian}\ar@{=>}[rru]
}}
\newcommand{\diaIReflexive}{~\xymatrixrowsep{1pc}\xymatrixcolsep{.75pc}\xymatrix{
&&\textrm{reflexive}\ar@{-}[dr]\\
\textrm{commutative}\ar@{=>}[r]\ar@{=>}[dr]&\textrm{symmetric}\ar@{=>}[r]&\textrm{reversible}\ar@{=>}[d]\ar@{=>}[u]&+\ar@{=>}[l]\\
&\textrm{duo}\ar@{=>}[r]&\textrm{semicommutative}\ar@{=>}[r]\ar@{-}[ur]&\textrm{abelian}\ar@{=>}[r]&\textrm{NI}
}}
\title{Minimal Reflexive Nonsemicommutative Rings}
\author{Henry Chimal-Dzul\\Department of Mathematics\\Ohio University\\Athens, OH 45701\\hc118813@ohio.edu\bigskip\\ Steve Szabo\\Department of Mathematics and Statistic\\Eastern Kentucky University\\Richmond, KY 40475\\steve.szabo@eku.edu}
\date{}
\begin{document}
\maketitle 
\begin{abstract}
It has recently been shown that a minimal reversible nonsymmetric ring has order 256 answering a questioned original posed in a paper on a taxonomy of 2-primal rings. Answers to similar questions on minimal rings relating to this taxonomy were also answered in a related work. One type of minimal ring that was left out of that report, was a minimal abelian reflexive nonsemicommutative ring. In this work it is shown that a minimal abelian reflexive nonsemicommutative ring is  of order 256 an example of which is $\F_2D_8$. This is a consequence of the other primary result which is that a finite abelian reflexive ring of order $p^k$ for some prime $p$ and $k<8$ is reversible.
\end{abstract}

\section{Introduction}
Reflexive rings were introduced by Mason in \cite{mason_1981} where he defined a reflexive ideal $I$ of a ring $R$ to be an ideal such that for $a,b\in R$, if $aRb\subset I$ then $bRa\subset I$. As with other such properties on ideals, he went on to define a reflexive ring to be a ring where the zero ideal is a reflexive ideal. Reflexive rings were studied in some detail in \cite{kwak_2012} where connections to semiprime rings, quasi-Baer rings and what they call idempotent reflexive rings were established. In \cite{gu_2013} extensions of reflexive rings were studied. A minimal noncommutative reflexive ring was found to be of order 16 in \cite{kim_2011} where they also show that minimal noncommutative reversible ring is also of order 16.

Minimal examples of various types of rings have become important in applications as many fields move to the use of finite rings from finite fields. As an example, in coding theory, the most general class of rings that are useful are finite Frobenius rings. This was justified by Wood in \cite{wood_1999}. So, it is important to be aware of small Frobenius rings. It is also instructive to have more tangible examples for understanding. For instance, an NI ring can be characterized as a ring whose set of nilpotent elements form an ideal. Commutative rings are NI. The ring $U_2(\F_2)$ is a ring of order 8 which is a minimal noncommutative ring. This ring is NI but notice that $M_2(\F_2)$ is not. This is actually a minimal non-NI ring (see \cite{szabo_2019_3}). For information on minimal rings of type duo see \cite{xue_1992}, reversible see \cite{kim_2011}, semicommutative see \cite{xu_1998} and Frobenius non-chain see \cite{martinez_2015}. Many of these minimal rings mentioned turned out to be of order 16. In \cite{derr_1994} it was shown that there are only 13 noncommutative rings of order 16. From their list and the fact that there is only one noncommutative ring of order 8, with some effort, the minimal rings mentioned so far could be found by simply sifting through this handful of rings. Such work becomes difficult when rings of larger orders are involve. With the classification of rings of order $p^5$ in \cite{corbas_2000} and \cite{corbas_2000_2}, it can be seen that such work becomes cumbersome as the classification of finite rings is difficult and that even for small orders, there is an abundance of rings.

A taxonomy of rings related to 2-primal rings was given in \cite{marks_2003} where the question was asked whether or not $\F_2Q_8$ was a minimal reversible nonsymmetric ring. This was answered positively in \cite{szabo_2019_4}. Another such example was also given there, namely
\[
\frac{\F_2\id{u,v}}{\id{u^3,v^3,u^2+v^2+vu,vu^2+uvu+vuv,u^2vu}},
\]
which is non-duo. Since $\F_2Q_8$ is a duo ring, this shows minimality is independent of the duo property. The mentioned taxonomy was expanded in \cite{szabo_2019_3} where minimal examples were provided to show the various ring class inclusions are strict. Many of the examples in \cite{szabo_2019_3} are rings that could not be found by simply observing the known classifications of rings of order up to $p^5$ since they are larger than 32. An example of a minimal abelian reflexive nonsemicommutative ring was the only type in the classification which was not given. That is the main subject of this paper. The main result of the paper is that a finite abelian reflexive ring of order $p^k$ for some prime $p$ and $k<8$ is reversible (Theorem \ref{theo_main}). With this and the ring $\F_2D_8$ in hand, which will be shown to be reflexive nonsemicommutative ring of order 256 (Example \ref{ex_f2d8}), it can be seen that a minimal abelian reflexive nonsemicommutative ring is of order 256 (Theorem \ref{theo_main2}). It should be pointed out that a minimal reflexive nonabelian ring was provided in Example 3.14 in \cite{szabo_2019_3} which is of order 64 and is minimal amongst all reflexive nonsemicommutative rings given Proposition 3.3 in \cite{szabo_2019_3}.

It is interesting that both minimal reversible nonsymmetric rings and minimal abelian reflexive nonsemicommutative rings are of order 256. Even more, $\F_2Q_8$ and $\F_2D_8$ are examples of the rings just mentioned which are the only two noncommutative group algebras over $\F_2$. The connection between these two ring classes will be clear when the proofs to the present results are compared to the results in \cite{szabo_2019_4}. The important connection is that a ring is reversible if and only if it is reflexive and semicommutative, a fact which is exploited to obtain the current results.

Recall that a finite ring is uniquely expressible as direct sum of rings of prime power order for distinct primes. In light of \cite[Proposition 2.3]{szabo_2019_4}, a reversible ring (and so reflexive) is local if and only if it is indecomposable. Moreover, since the direct sum of a collection of reflexive nonsemicommutative rings is reflexive nonsemicommutative, only finite rings of order $p^n$ need to be considered.

In Section \ref{sect_prelim}, definitions of the various ring types discussed are given as well as the ring class inclusions. It also covers some basic results on these ring types. Section \ref{sect_main} has the the first of the main results which shows that a finite abelian reflexive ring of order $p^k$ for some prime $p$ and $k<8$ is reversible. Section \ref{sect_main2} shows that a minimal abelian reflexive nonsemicommutative ring has order 256. Examples of such minimal rings are also given in Section \ref{sect_main2}.

\section{Preliminaries}
\label{sect_prelim}
In this paper all rings are assumed to be associative with identity $1\neq 0$. Given a ring  $R$, $J(R)$ is the Jacobson radical of $R$ and $N(R)$ is the set of nilpotent elements of $R$. In the present paper the following families of rings are considered.

\begin{definition}
\label{def}
A ring $R$ is ...
\begin{enumerate}
\item{\it symmetric} if for all $a,b,c\in R$, $abc=0$ implies $bac=0$.
\item{\it reversible} if for all $a,b\in R$, $ab=0$ implies $ba=0$.
\item{\it right (resp. left) duo} if for all $a,b\in R$, $ba\in aR$ ($ba\in Rb$) (equivalently, every right (left) ideal of $R$ is 2-sided). A ring that is both right and left duo is simply a {\it duo} ring.
\item{\it semicommutative} if for all $a,b\in R$, $ab=0$ implies $aRb=0$.
\item{\it reflexive} if for all $a,b\in R$, $aRb=0$ implies $bRa=0$.
\item{\it abelian} if each idempotent of $R$ is central.
\item{\it NI} if $N(R)\lhd R$.
\end{enumerate}
\end{definition}

Clearly, any symmetric ring is reversible. Also, in view of \cite[Lemma 2.7]{szabo_2019_3}, a ring is reversible if and only if it is reflexive and semicommutative. Direct computations show that one-sided duo rings are semicommutative, and semicommutative rings are abelian (\cite[Lemma 2.5]{szabo_2019_3}). Finally, in the context of finite rings, both right and left duo rings are duo and abelian rings are NI neither of which is the case in general. Figure \ref{fig1} is a diagram taken from \cite{szabo_2019_3} of the various finite ring class inclusions. The class inclusions in Figure \ref{fig1} are strict as has been shown in \cite{hwang_2006,marks_2002,marks_2003,szabo_2019_3}. In particular, in the recent works \cite{szabo_2019_4} and \cite{szabo_2019_3} some examples of minimal rings that support the independence between the families of symmetric, reversible, reflexive, duo and semicommutative rings were presented. The present article is a further contribution in this direction. Of course a search of minimal rings need only consider finite rings.

\begin{center}
\begin{figure}[h!]
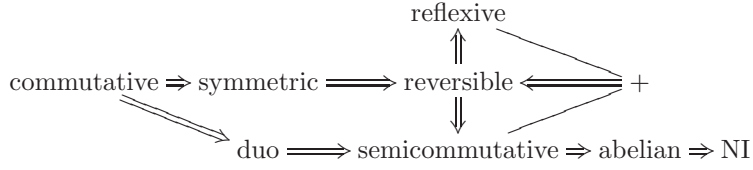

\diaIReflexive
\caption{Finite Rings}
\label{fig1}
\end{figure}
\end{center}

For basic facts about local rings, the reader may consult \cite{lam_2001}. Here some important facts for the purposes of this paper are reviewed. When $R$ is finite, $J=J(R)$ is a nilpotent (two-sided) ideal, so there exists a least integer $K$ such that $J^{K+1}=0$ but $J^{K}\neq 0$. The integer $K+1$ is called the \textit{index of nilpotency} of $J$. The following is a well-known result about finite local rings.

\begin{lemma}
\label{lemma_vspace}
Let $R$ be a finite local ring, $K+1$ the index of nilpotency of $J=J(R)$ and $F=R/J$. Then $F$ is a finite field and for each $1\leq i\leq K$, $J^i/J^{i+1}$ is a finite dimensional $F$-vector space.
\end{lemma}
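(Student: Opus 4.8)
The plan is to treat the two assertions separately, each by invoking standard structure theory for finite local rings. For the first claim, I would argue that $F=R/J$ is a finite field as follows. Since $R$ is local, $J$ is simultaneously the unique maximal left ideal and the unique maximal right ideal of $R$, so $R/J$ is a division ring. Being a quotient of the finite ring $R$, it is a \emph{finite} division ring, and by Wedderburn's little theorem every finite division ring is commutative. Hence $F$ is a finite field.

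For the second claim, fix $1\leq i\leq K$. Since $J$ is a two-sided ideal, so is each power $J^i$, and $J^{i+1}\subseteq J^i$; thus the quotient abelian group $J^i/J^{i+1}$ inherits a natural left $R$-module structure. The key point is that this module is annihilated by $J$: indeed $J\cdot J^i\subseteq J^{i+1}$, so $J\cdot(J^i/J^{i+1})=0$. Consequently the $R$-action factors through $R/J=F$, which endows $J^i/J^{i+1}$ with the structure of a module over the field $F$, i.e.\ an $F$-vector space.

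Finally, $J^i/J^{i+1}$ is a subquotient of the finite additive group of $R$, hence finite as a set; a vector space over any field that is finite as a set must be finite dimensional, since an infinite-dimensional space contains an infinite linearly independent family and so is infinite. Therefore $\dim_F(J^i/J^{i+1})<\infty$, which completes the argument. I do not anticipate a genuine obstacle here: the only steps requiring a moment's care are the appeal to Wedderburn's little theorem to upgrade the division ring $R/J$ to a field, and the verification that each $J^i$ absorbs $J$ into $J^{i+1}$, so that the $F$-module structure on the successive quotients is well defined.
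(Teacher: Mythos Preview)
Your argument is correct and complete: the two steps (Wedderburn's little theorem for $R/J$, and the observation that $J$ annihilates each $J^i/J^{i+1}$ so that the $R$-action descends to an $F$-action) are exactly the standard ones. The paper does not actually supply a proof of this lemma---it is stated as a well-known fact about finite local rings---so your write-up is more detailed than what appears there, but entirely in line with what the authors have in mind.
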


Using Lemma \ref{lemma_vspace}, it can be shown that in a local ring $R$ with $J=J(R)$, $K+1$ the index of nilpotency of $J$ and $F=R/J$, there exists a minimal set of generators of $J(R)^i$, $\{u_{i1},\dots,u_{id_i}\}\subset J^i\setminus J^{i+1}$ for each $1\leq i\leq K$, and that each element of $R$ can be uniquely expressed in the form
\[
\alpha_0+\sum_{i=1}^K\alpha_{i1}u_{i1}+\dots+\alpha_{id_i}u_{id_i},
\]
where $\alpha_0,\alpha_{ij}\in F$ and $d_i=\dim_{F}J^i/J^{i+1}$. This fact will be used freely throughout the work.

Recall that a ring $R$ is called a \textit{left (right resp.) chain ring} if its lattice of left (right resp.) ideals form a chain under inclusion.  This type of finite ring has been characterized by E. W. Clark and D. A. Drake in \cite[Lemma 1]{clark_1973}. Pertinent facts on finite chain rings are collected in the following lemma.

\begin{lemma}
\label{lemma_chain}
For a finite ring $R$ with $J=J(R)$, the following are equivalent:
\begin{enumerate}
\item $R$ is a left chain ring.
\item $R$ is a local ring and $J$ is principal.
\item there exists $u\in R$ such that every proper left or right ideal has the form $J^i=\langle u^i\rangle$ for some $i\in\mathbb{N}$.
\item $R$ is a right chain ring.
\end{enumerate}
\end{lemma}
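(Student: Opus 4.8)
The plan is to prove the cycle $(1)\Rightarrow(2)\Rightarrow(3)\Rightarrow(1)$ and then adjoin statement $(4)$ via $(3)\Rightarrow(4)$ together with $(4)\Rightarrow(2)$, the last implication being the left--right mirror of $(1)\Rightarrow(2)$. The only non-formal ingredients will be Lemma~\ref{lemma_vspace}, Nakayama's lemma for (automatically finitely generated) modules over the finite ring $R$, and one counting argument that exploits finiteness to move between left and right.

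For $(1)\Rightarrow(2)$, I would first note that a left chain ring has a largest proper left ideal, hence a unique maximal left ideal, so $R$ is local with $J$ that ideal. Next I would show $\dim_F J/J^2\le 1$: if $\bar a,\bar b\in J/J^2$ were $F$-independent, then $Ra+J^2$ and $Rb+J^2$ would be incomparable left ideals, contradicting the chain hypothesis. Picking $u\in J\setminus J^2$ so that $\bar u$ spans $J/J^2$, Nakayama gives $J=Ru$, which is $(2)$.

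For $(2)\Rightarrow(3)$: assuming $R$ local with $J=Ru$, note $u\notin J^2$ (otherwise $J=Ru\subseteq J^2\subsetneq J$), so $\dim_F J/J^2=1$. Here finiteness enters: $J/J^2$ is also naturally a right $F$-module, finite and hence finite-dimensional, with $|J/J^2|=|F|$ elements, so its right dimension is $1$ as well; thus $\bar u$ also spans $J/J^2$ on the right and Nakayama gives $J=uR$ with the same $u$. From $JR=RJ=J$ one computes $J^2=JRu=Ju=Ru^2$ and, inductively, $J^i=Ru^i=u^iR$; since $R$ is finite, $J$ is nilpotent, say of index $K+1$, and $J^i/J^{i+1}\cong F$ for $1\le i\le K$. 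Given a nonzero proper left ideal $I$, take $i$ maximal with $I\subseteq J^i$ and $x\in I\setminus J^{i+1}$; writing $x=ru^i$, the coefficient $r$ must be a unit (else $x\in Ju^i=J^{i+1}$), so $u^i=r^{-1}x\in I$ and therefore $I=J^i=\id{u^i}$. The symmetric computation with $x=u^i s$ handles proper right ideals, giving $(3)$.

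The implications $(3)\Rightarrow(1)$ and $(3)\Rightarrow(4)$ are immediate, as $(3)$ exhibits all proper left ideals, and all proper right ideals, as the chain $0\subsetneq J^K\subsetneq\cdots\subsetneq J\subsetneq R$. Finally $(4)\Rightarrow(2)$ is obtained by reading the proof of $(1)\Rightarrow(2)$ with ``left'' and ``right'' interchanged. I expect the only subtle point to be the left--right transfer in $(2)\Rightarrow(3)$ --- upgrading ``$J=Ru$'' to ``$J=uR=Ru$'' and hence obtaining the two-sided description $J^i=Ru^i=u^iR$ --- which is exactly where the hypothesis that $R$ is finite is essential; the remaining steps are routine applications of Nakayama's lemma. (This is the Clark--Drake theorem \cite[Lemma~1]{clark_1973}; the above is a streamlined account.)
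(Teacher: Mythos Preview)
The paper does not actually prove Lemma~\ref{lemma_chain}; it only states the result and attributes it to Clark--Drake \cite[Lemma~1]{clark_1973}. Your proposal therefore supplies strictly more than the paper does, and the argument you outline is correct. The cycle $(1)\Rightarrow(2)\Rightarrow(3)\Rightarrow(1)$ together with the symmetric handling of $(4)$ is the standard route, and your identification of the one genuinely delicate step---passing from $J=Ru$ to $J=uR$ via the cardinality $|J/J^2|=|F|$, which forces the right $F$-dimension to equal the left $F$-dimension---is exactly the place where finiteness is used and is handled correctly. One small remark: in $(2)$ you tacitly read ``principal'' as ``principal as a one-sided ideal,'' which is the intended reading in Clark--Drake and is what your proof of $(1)\Rightarrow(2)$ actually delivers; it is worth being explicit about this, since ``principal as a two-sided ideal'' is a priori weaker and would not feed directly into your $(2)\Rightarrow(3)$ argument.
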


\begin{lemma}
\label{lemma_sc1}
Let $R$ be a finite chain ring with $J=J(R)\neq 0$, $\sigma \in S_n$, $x_1,\ldots,x_n\in J\setminus \{0\}$ and $l=\sum_{i=1}^n\alpha_i$, where $x_i\in J^{\alpha_i}\setminus J^{\alpha_i+1}$ for $1\leq i\leq n$. Then $x_1\cdots x_n\in J^{l}$ if and only if $x_{\sigma(1)}\cdots x_{\sigma(n)}\in J^l$.
\end{lemma}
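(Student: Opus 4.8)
The plan is to exploit the rigidity of finite chain rings: I claim that in such a ring a product $x_{\pi(1)}\cdots x_{\pi(n)}$ of nonzero elements of $J$ depends on the permutation $\pi$ only up to a unit factor, so it can never matter for membership in $J^l$. Fix, via Lemma~\ref{lemma_chain}, an element $u\in R$ with $J=uR=Ru$ and $J^i=u^iR=Ru^i$ for all $i\ge 0$. Since $J\neq 0$ the index of nilpotency $K+1$ of $J$ is defined, the powers $R=J^0\supsetneq J^1\supsetneq\cdots\supsetneq J^K\supsetneq J^{K+1}=0$ strictly decrease, and $u^i\neq 0$ exactly when $i\le K$. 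I will show that $x_1\cdots x_n$ equals $u^l$ times a unit when $l\le K$ and equals $0$ when $l>K$, and likewise for the reordered product; in either case both products lie in $J^l$, which is the assertion.

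The computation rests on two observations about $u$. First, every $x\in J^{\alpha}\setminus J^{\alpha+1}$ can be written $x=u^{\alpha}w$ with $w$ a unit: using $J^{\alpha}=u^{\alpha}R$ write $x=u^{\alpha}r$, and if $r\in J$ then $x\in u^{\alpha}J=u^{\alpha+1}R=J^{\alpha+1}$, a contradiction, so $r=w\notin J$. Second, for any unit $w$ and any $\alpha\ge 0$ there is a unit $s$ with $wu^{\alpha}=u^{\alpha}s$: when $\alpha\le K$ we have $wu^{\alpha}\in Ru^{\alpha}=J^{\alpha}$, while $wu^{\alpha}\notin J^{\alpha+1}$ since otherwise $u^{\alpha}=w^{-1}(wu^{\alpha})$ would lie in the two-sided ideal $J^{\alpha+1}$; hence $wu^{\alpha}\in u^{\alpha}R\setminus u^{\alpha+1}R$, so $wu^{\alpha}=u^{\alpha}s$ with $s\notin J$; when $\alpha>K$ both sides vanish and $s=1$ works. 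This second observation — that a unit can be pushed past a power of $u$ at the cost of changing the unit — is the only real point of the proof, and it is exactly here that the chain hypothesis enters, through $J^i=u^iR=Ru^i$; in a general finite local ring it fails, and so does the lemma.

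With these in hand, an induction on $k$ gives $x_1\cdots x_k=u^{\alpha_1+\cdots+\alpha_k}\cdot(\text{unit})$ whenever $\alpha_1+\cdots+\alpha_k\le K$ and $x_1\cdots x_k=0$ otherwise: at each step write $x_k=u^{\alpha_k}w_k$ by the first observation and commute the accumulated unit past $u^{\alpha_k}$ by the second, absorbing it into $w_k$. Taking $k=n$ shows $x_1\cdots x_n=u^lw$ for a unit $w$ if $l\le K$ and $x_1\cdots x_n=0$ if $l>K$; either way $x_1\cdots x_n\in J^l$. Since $l=\sum_{i}\alpha_i=\sum_i\alpha_{\sigma(i)}$ is unchanged by $\sigma$, running the same argument on $x_{\sigma(1)},\dots,x_{\sigma(n)}$ puts $x_{\sigma(1)}\cdots x_{\sigma(n)}$ in $J^l$ as well, so the stated equivalence holds — in fact both products vanish together (when $l>K$) and are associates generating $J^l$ together (when $l\le K$).
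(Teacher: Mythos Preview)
Your proof is correct and follows essentially the same approach as the paper: write each $x_i$ as $u^{\alpha_i}$ times a unit (using that $R$ is local and $J^i=\langle u^i\rangle$), slide units past powers of $u$, and conclude that any ordering of the product lies in $J^l$. The paper's version is terser---it simply records $x=s_1u^{\alpha}=u^{\alpha}s_2$ for units on both sides and then says ``an inductive argument shows\ldots''---but the content is identical; you have just made explicit the commutation step (your ``second observation'') and the fact that the biconditional is vacuous since both products always lie in $J^l$.
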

\begin{proof}
By Lemma \ref{lemma_chain}, there exists $u\in J$ such that $J=\id{u}$. Let $K+1$ be the index of nilpotency of $J$ and $x,y\in J\setminus \{0\}$. Then there exist positive integers $1\leq \alpha,\beta\leq K$ such that $x\in J^{\alpha}\setminus J^{\alpha+1}$ and  $y\in J^{\beta}\setminus J^{\beta +1}$. Since $R$ is local, elements not in $J$ are units. Thus, Lemma \ref{lemma_chain} establishes that $x=s_1u^\alpha=u^\alpha s_2$  and  $y=t_1u^{\beta}=u^{\beta}t_2$ for some units $s_i,t_i$, $i=1,2$. Consequently,
\[
xy=(s_1u^\alpha)(u^\beta t_2)=s_1u^{\alpha+\beta}t_2,\textrm{ and }yx=(t_1u^\beta)(u^\alpha s_2)=t_1u^{\alpha+\beta}s_2.
\]
Then $xy$ and $yx$ are in $J^{\alpha+\beta}$. An inductive argument shows that $x_1\cdots x_n\in J^{l}$ if and only if $x_{\sigma(1)}\cdots x_{\sigma(n)}\in J^l$.
\end{proof}

\begin{proposition}
\label{lemma_chainsym}
 A finite chain ring is duo and symmetric.
\end{proposition}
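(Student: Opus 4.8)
The plan is to prove the two assertions separately, using the structure of finite chain rings from Lemma~\ref{lemma_chain} together with the reordering principle of Lemma~\ref{lemma_sc1}. The duo property is essentially immediate; the symmetric property will follow from the observation that, in a finite chain ring, whether a product of nonzero elements vanishes depends only on the multiset of their ``valuations''.

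For the duo assertion: if $J=J(R)=0$ then $R$ is a finite field and there is nothing to prove, so assume $J\neq 0$. By Lemma~\ref{lemma_chain}(3) every proper left ideal and every proper right ideal of $R$ equals $J^i=\id{u^i}$ for some $i\in\N$, and each such $J^i$ is a two-sided ideal; since $0$ and $R$ are two-sided as well, every one-sided ideal of $R$ is two-sided, i.e.\ $R$ is both left duo and right duo.

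For the symmetric assertion: again the case $J=0$ is trivial, so assume $J\neq 0$, let $K+1$ be the index of nilpotency of $J$, and fix $u\in R$ as in Lemma~\ref{lemma_chain}. For nonzero $x\in R$ let $v(x)$ denote the unique integer with $x\in J^{v(x)}\setminus J^{v(x)+1}$; then $v(x)=0$ exactly when $x$ is a unit, and $x=su^{v(x)}=u^{v(x)}s'$ for some units $s,s'$ (as in the proof of Lemma~\ref{lemma_sc1}). I would first record the elementary fact that a unit can be pushed past a power of $u$: for a unit $t$ and $0\le j\le K$ one has $u^jt\in J^j\setminus J^{j+1}$ (if $u^jt\in J^{j+1}$ then $u^j=(u^jt)t^{-1}\in J^{j+1}$, a contradiction, since $J^{j+1}$ is a right ideal), so $u^jt=t'u^j$ for some unit $t'$. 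Iterating this, every product of nonzero elements $x_1,\dots,x_n$ equals a unit times $u^{v(x_1)+\cdots+v(x_n)}$; since multiplication by a unit is a bijection on the finite ring $R$, this product is $0$ if and only if $v(x_1)+\cdots+v(x_n)\ge K+1$ --- which is precisely the content of Lemma~\ref{lemma_sc1} in the case $l>K$. Now take $a,b,c\in R$ with $abc=0$. If one of $a,b,c$ equals $0$ then $bac=0$ trivially; otherwise $a,b,c$ are all nonzero, the vanishing of $abc$ forces $v(a)+v(b)+v(c)\ge K+1$, and the very same inequality yields $bac=0$. Hence $R$ is symmetric.

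The one step requiring care, and the one I expect to be the main obstacle, is the claim that a unit commutes past $u^j$ at the cost of being replaced by another unit (equivalently, that $u^jt\notin J^{j+1}$ for a unit $t$ and $j\le K$); this is exactly what makes the vanishing criterion symmetric in the factors, and it depends on each $J^i$ being a genuine two-sided ideal, which comes from the chain structure. Everything else is routine bookkeeping. Alternatively, one could invoke Lemma~\ref{lemma_sc1} directly, first disposing by one-sided cancellation of the sub-cases in which one of $a,b,c$ is a unit or zero, and appealing to the lemma only when $a,b,c\in J\setminus\{0\}$ and $v(a)+v(b)+v(c)>K$.
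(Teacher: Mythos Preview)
Your proof is correct and follows the same strategy the paper indicates: duo is read off from Lemma~\ref{lemma_chain}(3), and symmetry is obtained from the valuation/reordering principle underlying Lemma~\ref{lemma_sc1}. The paper's own proof is a two-line sketch (``duo is immediate from Lemma~\ref{lemma_chain}; symmetry follows from Lemma~\ref{lemma_sc1}''), and what you have written is exactly the expanded argument one would supply to make that sketch precise, including the unit-pushing step and the edge cases where a factor is zero or a unit; your closing remark about invoking Lemma~\ref{lemma_sc1} directly \emph{is} the paper's route.
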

\begin{proof}
A finite chain ring being duo is an immediate consequence of Lemma \ref{lemma_chain}. With Lemma \ref{lemma_sc1}, the fact that a finite chain ring is symmetric can also be easily established.
\end{proof}

To conclude this section, three technical lemmas are given which are needed in the results of Section \ref{sect_main}. Lemma \ref{lemma_x1} establishes the fact that for a finite local ring $R$ if $J(R)^i$ is principal and $J(R)^{i+1}\neq 0$ for some $i$, then $J(R)^j$ is principal for $j\geq i$ and that there is an element of $J(R)\setminus J(R)^2$ that has the same index of nilpotency as $J(R)$.

\begin{lemma}
\label{lemma_x1}
Let $R$ be a finite local ring, $J=J(R)$ and $F=R/J$. Assume there is $n\geq 1$ such that $J^{n}$ is principal and $J^{n+1}\neq 0$. Then there is $u\in J\setminus J^{2}$ such that $J^{l}=\langle u^l\rangle$ for all $l\geq n$.
\end{lemma}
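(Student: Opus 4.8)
The plan is to build the desired element $u$ by starting from a minimal generating set of $J$ and perturbing one generator until it becomes ``large enough.'' Fix a minimal set of generators $\{v_1,\dots,v_d\}$ of $J$ with $d=\dim_F J/J^2$, so that every element of $J^2$ is an $F$-linear combination (with coefficients lifted arbitrarily, modulo higher powers) of products $v_iv_j$. The first observation is that since $J^n=\langle w\rangle$ is principal and $J^{n+1}=\langle w\rangle J\neq 0$, Nakayama forces $J^l=\langle w^{\,l/n}\rangle$-type behaviour, but more usefully: $J^l/J^{l+1}$ is $1$-dimensional over $F$ for every $l\geq n$, because $J^l=J^{l-n}J^n$ is a quotient of $J^n/J^{n+1}$ as an $F$-module and is nonzero until it vanishes. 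Let $K+1$ be the index of nilpotency of $J$; then $J^l\neq 0$ exactly for $l\leq K$, and $n\leq K$ by hypothesis.

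Next I would show that some product $v_{i_1}\cdots v_{i_n}$ lies in $J^n\setminus J^{n+1}$: indeed $J^n$ is spanned over $F$ by all length-$n$ products of the $v_i$, and since $J^n\neq J^{n+1}$ at least one such product is not in $J^{n+1}$. By reindexing, assume $v_1 v_{i_2}\cdots v_{i_n}\notin J^{n+1}$; I claim we may take $u=v_1$. To see that $J^l=\langle u^l\rangle$ for all $l\geq n$, it suffices by the one-dimensionality of each $J^l/J^{l+1}$ (for $l\geq n$) and Nakayama to show $u^l\notin J^{l+1}$ for all $l$ with $n\leq l\leq K$. The key step is therefore: \emph{from $v_1v_{i_2}\cdots v_{i_n}\notin J^{n+1}$, deduce $v_1^{\,l}\notin J^{l+1}$ for every $n\leq l\leq K$.} Here I would argue that $J^n\setminus J^{n+1}$ consists exactly of the elements $x$ with $x\in J^n$ and $x\notin J^{n+1}$, and that multiplication $J^n/J^{n+1}\to J^{l}/J^{l+1}$ by any fixed element of $J^{l-n}\setminus J^{l-n+1}$ is an $F$-isomorphism (both sides one-dimensional, and it is surjective since $J^l=J^{l-n}\cdot J^n$ and each generator of $J^l$ is hit). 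So it is enough that $v_1^{\,n}\notin J^{n+1}$ and that each further multiplication by $v_1$ sends a generator of $J^l/J^{l+1}$ to a generator of $J^{l+1}/J^{l+2}$ — the latter is automatic once $v_1\in J\setminus J^2$ and $J^{l+1}\neq 0$, because the map $J^l/J^{l+1}\xrightarrow{\cdot v_1}J^{l+1}/J^{l+2}$ between one-dimensional spaces is either zero or onto, and if it were zero for some $l$ then $v_1^{\,l+1}\in J^{l+2}$, which propagates upward and is fine, but would contradict... — and this is exactly the delicate point.

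The main obstacle, then, is precisely this propagation: knowing $v_1 v_{i_2}\cdots v_{i_n}\notin J^{n+1}$ tells us a \emph{mixed} product is a generator of $J^n/J^{n+1}$, not that the pure power $v_1^{\,n}$ is. To bridge this gap I would use Lemma~\ref{lemma_x1}'s own hypotheses more carefully: since $J^n$ is principal, write $J^n=\langle u_0\rangle$ for some $u_0\in J^n$, and note $u_0\notin J^{n+1}$. The plan is to lift $u_0$ to an element of the form $\sum \alpha_j (\text{length-}n\text{ products of the }v_i)$, pick out a product appearing with a unit coefficient, say $v_{j_1}\cdots v_{j_n}$, and then set $u=v_{j_1}$ after checking $v_{j_1}\in J\setminus J^2$ (true, since all $v_i$ are). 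One then shows $u^n\equiv$ (unit)$\cdot u_0 \pmod{J^{n+1}}$ would require commuting the factors, which we do \emph{not} have in general — so instead I would avoid pure powers entirely and observe that what Lemma~\ref{lemma_x1} actually needs downstream is only the conclusion as stated; re-examining, the honest route is: $J^n$ principal generated by $u_0$ means $u_0 R = R u_0 = J^n$ by Nakayama-type arguments in the local setting, hence for $l\geq n$, $J^l = u_0^{\,?}$... I would therefore reduce to the case $n=1$ by replacing $R$ with a ring in which $J^n$ plays the role of $J$ is not available, so the cleanest fix is to prove directly that if $J^n=\langle u_0\rangle$ is principal with $J^{n+1}\neq0$ then $J^{n+1}=u_0 J = J u_0$ is principal, hence by induction $J^l$ is principal for all $l\geq n$ and each $J^l/J^{l+1}$ is $1$-dimensional; then choose $u\in J\setminus J^2$ whose image generates $J/J^2$ in a direction compatible with $u_0$ — concretely, some $v_i$ with $v_i J^{n-1}\not\subseteq J^{n+1}$ — and verify inductively that $u^l$ generates $J^l$ for $l\geq n$ using that $\cdot u:J^l/J^{l+1}\to J^{l+1}/J^{l+2}$ is onto (as $u J^l + J^{l+2} = J^{l+1}$, which follows since $uJ^l$ together with the other $v_jJ^l$ span $J^{l+1}$ and one shows the $u$-part already suffices modulo $J^{l+2}$ by the principality). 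That last verification is the real content and the step I expect to spend the most care on.
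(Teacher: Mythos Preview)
Your plan correctly isolates the two ingredients---one-dimensionality of $J^l/J^{l+1}$ for $l\geq n$, and finding $u\in J\setminus J^2$ with $u^n\notin J^{n+1}$ (the base case), after which an induction handles $l>n$---but you do not close the gap you yourself flag, and both of your proposed choices of $u$ can fail. Taking $u$ to be the leftmost factor $v_{i_1}$ of a length-$n$ product lying in $J^n\setminus J^{n+1}$ does not force $u^n\notin J^{n+1}$: nothing prevents left multiplication by $v_{i_1}$ from killing $J^n/J^{n+1}$ even though $v_{i_1}v_{i_2}\cdots v_{i_n}$ generates it. Your later choice, a $v_i$ with $v_iJ^{n-1}\not\subseteq J^{n+1}$, concerns the wrong layer and likewise gives no control over the map $J^n/J^{n+1}\to J^{n+1}/J^{n+2}$. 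Your final claim that ``the $u$-part already suffices modulo $J^{l+2}$ by the principality'' is precisely what needs proving and remains unproven.

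The paper's proof fixes both problems with one move. Write $J^n=\langle x_1\cdots x_n\rangle$ with $x_i\in J\setminus J^2$, and choose $u$ among a fixed generating set $u_1,\dots,u_s$ of $J$ so that $u\,x_1\cdots x_n\in J^{n+1}\setminus J^{n+2}$; such a $u$ exists because the elements $u_jx_1\cdots x_n$ generate $J^{n+1}\neq J^{n+2}$. This choice makes left multiplication by $u$ an isomorphism $J^n/J^{n+1}\to J^{n+1}/J^{n+2}$ of one-dimensional $F$-spaces. The step you are missing is a \emph{peeling argument}: since $u\,x_1\cdots x_{n-1}\in J^n\setminus J^{n+1}$ (otherwise right-multiplying by $x_n$ would land in $J^{n+2}$), write $u\,x_1\cdots x_{n-1}\equiv\alpha_1 x_1\cdots x_n\pmod{J^{n+1}}$ with $\alpha_1$ a unit; multiplying on the left by $u$ gives $u^2x_1\cdots x_{n-1}\equiv u\alpha_1 x_1\cdots x_n\not\equiv 0\pmod{J^{n+2}}$, hence $u^2x_1\cdots x_{n-2}\in J^n\setminus J^{n+1}$. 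Iterating replaces one $x_i$ by a copy of $u$ at each step until $u^n$ generates $J^n$. The induction for $l>n$ is then a one-liner that avoids your surjectivity claim entirely: if $J^l=\langle u^l\rangle$, write each $u_ju^{l-1}\in J^l$ as $r_ju^l$, so $u_ju^l=(u_ju^{l-1})u=r_ju^{l+1}$, whence $J^{l+1}=\langle u^{l+1}\rangle$.
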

\begin{proof}
Let $u_1,\dots,u_s\in J\setminus J^2$ such that $J=\langle u_1,\dots,u_s\rangle$. Since $J^n$ is principal, there is $x_1,\dots,x_n\in J\setminus J^{2}$ such that $J^n=\langle x_1\cdots x_n\rangle$. Hence $J^{n+1}=\langle u_1x_1\cdots x_n,\dots, u_s x_1\cdots x_n\rangle$. Also, by the fact that $J^{n+1}\neq 0$, then $J^{n+1}\neq J^{n+2}$, so that there is $j$ such that $u_j x_1\cdots x_n\in J^{n+1}\setminus J^{n+2}$. Let $u=u_j$. Using that $J^n/J^{n+1}$ is a 1-dimensional $F$-vector space, there is $\alpha_1\in F\setminus\{0\}$ such that
\[
ux_1\cdots x_{n-1}+J^{n+1}=\alpha_1x_1\cdots x_n+J^{n+1}\neq J^{n+1}.
\]
Then
\[
u^2x_1\cdots x_{n-1}+J^{n+2}=u\alpha_1 x_1\cdots x_n+J^{n+2}\neq J^{n+2}.
 \]
Since $\alpha_1$ is a unit, $u\alpha_1 x_1\cdots x_n\in J^{n+1}\setminus J^{n+2}$ by the choice of $u$. Thus, $u^2x_1\cdots x_{n-1}\in J^{n+1}\setminus J^{n+2}$, where we conclude that $u^2x_1\cdots x_{n-2}\in J^{n}\setminus J^{n+1}$. Again, there is $\alpha_2\in F\setminus\{0\}$ where
\[
u^2x_1\cdots x_{n-2}+J^{n+1}=\alpha_2 x_1\cdots x_n+J^{n+1},
\]
which implies
\[
u^3x_1\cdots x_{n-2}+J^{n+2}=u\alpha_2 x_1\cdots x_n+J^{n+2}.
\]
This leads to the fact $u^3x_1\cdots x_{n-3}\in J^{n}\setminus J^{n+1}$. Continuing in the same manner, it can be shown that $u^n+J^{n+1}=\alpha_nx_1\cdots x_n+J^{n+1}$ for some $\alpha_n\in F\setminus\{0\}$. That is,  $J^n=\langle a^{n}\rangle$.

Now, assume $J^{l}=\langle u^l\rangle$ for some $l\geq n$. Then $J^{l+1}=\langle u_1 u^l,\dots, u_s u^l\rangle$. For each $j$, $u_j u^{l-1}\in J^{l}$, so there is $r_j\in R$ such that $u_ju^{l-1}=r_ju^{l}$. Thus,
\[
u_ju^l=(u_ju^{l-1})u=(r_ju^l)u=r_ju^{l+1}.
\]
It follows that $J^{l+1}=\langle u^{l+1}\rangle$.
\end{proof}

\begin{lemma}
\label{lemma_lrs}
Let $R$ be a local ring with $J=J(R)$ and $F=R/J$. Assume $F$ is a prime field. Then $R$ is semicommutative if and only if for any $a,b\in J$ where $ab=0$, $aJb=0$.
\end{lemma}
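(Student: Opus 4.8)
The forward implication I would dispatch in a line: if $R$ is semicommutative and $a,b\in J$ satisfy $ab=0$, then already $aRb=0$, hence a fortiori $aJb=0$ since $J\subseteq R$.

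For the converse the plan has two moves. First, reduce to the case $a,b\in J$. Given $a,b\in R$ with $ab=0$, if $a\notin J$ then $a$ is a unit (this is the one place locality enters), so $b=0$ and $aRb=0$; symmetrically if $b\notin J$. Thus it suffices to prove $aRb=0$ whenever $a,b\in J$ with $ab=0$, and for such a pair the hypothesis provides $aJb=0$. Second, exploit that $F=R/J$ is a prime field: then $F=\mathbb{F}_p$ for some prime $p$, so the canonical ring map $\mathbb{Z}\to R\to R/J$ is surjective, and consequently every $r\in R$ can be written $r=m\cdot 1+j$ with $m\in\mathbb{Z}$ and $j\in J$. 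Fixing $a,b\in J$ with $ab=0$ and any such $r$, and using that $m\cdot 1$ is central,
\[
arb=a(m\cdot 1+j)b=m(ab)+ajb=0,
\]
because $ab=0$ and $ajb\in aJb=0$. Hence $aRb=0$, which is exactly semicommutativity of $R$.

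I do not expect a genuine obstacle; the lemma is really just the observation that, once units are handled via locality, testing the implication ``$ab=0\Rightarrow a(-)b=0$'' against all of $R$ is the same as testing it against $J$, since the prime-field scalars attached to $1$ get absorbed into integer multiples of $1$. The only points requiring care are to make sure the hypothesis is invoked solely for pairs $a,b\in J$ with $ab=0$ (guaranteed by the reduction step) and to note that primality of $F$ — equivalently, that $R$ is generated as a ring by $1$ together with $J$ — is genuinely what makes the decomposition $r=m\cdot 1+j$ available.
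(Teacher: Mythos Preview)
Your argument is correct and follows essentially the same route as the paper: the forward direction is immediate, and for the converse you reduce to $a,b\in J$ via locality, then decompose an arbitrary $r\in R$ as an integer multiple of $1$ plus an element of $J$ using that $F$ is prime. The paper phrases the decomposition as $r=r_0+r_1$ with $r_0\in F$ and $r_1\in J$ and then unpacks $r_0$ as a sum of $1$'s, but this amounts to the same computation as your $r=m\cdot 1+j$.
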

\begin{proof}
Clearly, if $R$ is semicommutative, the condition holds. Now, assume the condition holds. Let $a,b\in R$ where $ab=0$. If $a$ or $b$ is a unit then the other is 0 and $aRb=0$. So, assume $a,b\in J$. Let $r\in R$. Then $r=r_0+r_1$ for some $r_0\in F$ and $r_1\in J$. Since $F$ is prime,
\[
arb=a(r_0+r_1)b=ar_0b+ar_1b=ar_0b=a(\underbrace{1+\cdots+1}_{r_0-times})b=r_0ab=0.
\]
So, $aRb=0$ and $R$ is semicommutative.
\end{proof}

\begin{lemma}
\label{lemma_lr}
Let $R$ be a local reflexive ring with $J=J(R)$ and $F=R/J$. Assume that $J^4=0$ and $F$ is a prime field. Let $a,b,c\in J$. Then $abc=0$ if and only if $bca=0$ if and only if $cab=0$.
\end{lemma}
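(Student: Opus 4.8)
The plan is to reduce everything to a single cyclic implication, say
$abc=0\ \Rightarrow\ bca=0$, and then recover the full chain of equivalences by applying that implication to the triples $(b,c,a)$ and $(c,a,b)$, which yields
\[
abc=0\ \Rightarrow\ bca=0\ \Rightarrow\ cab=0\ \Rightarrow\ abc=0,
\]
so that $abc=0$, $bca=0$ and $cab=0$ are all equivalent.

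To prove $abc=0\ \Rightarrow\ bca=0$, I would feed reflexivity the grouped pair $x=a$ and $y=bc\in J^2$. For an arbitrary $r\in R$, write $r=r_0+r_1$ with $r_0\in F$ (an integer multiple of $1$, legitimate since $F$ is prime) and $r_1\in J$, exactly as in the proof of Lemma \ref{lemma_lrs}. Then $xry=r_0(abc)+ar_1(bc)$; the first summand is $0$ by hypothesis, and the second lies in $J\cdot J\cdot J^2=J^4=0$. Hence $aR(bc)=0$, and reflexivity of $R$ gives $(bc)Ra=0$; taking $r=1$ yields $bca=0$. (Grouping instead as $x=ab$, $y=c$ works identically and produces $cab=0$ directly, so either grouping could be used.)

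I do not expect a genuine obstacle here. The entire argument hinges on the fact that $J^4=0$ kills any product having one factor in $J^2$ and one factor in $J$, so the only surviving part of $xry$ is the scalar part governed by $r_0$. The single point requiring care is that reflexivity is a statement about the one-sided condition $xRy=0$, not about two-sided ideals, so the prime-field reduction of $xry$ to its scalar part is exactly what makes reflexivity usable; this is the same mechanism already exploited in Lemma \ref{lemma_lrs}.
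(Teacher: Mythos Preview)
Your argument is correct and is essentially the same as the paper's: both group the product as $a\cdot(bc)$, use $J^4=0$ together with the prime-field decomposition $r=r_0+r_1$ to reduce $ar(bc)$ to $r_0(abc)$, and then invoke reflexivity to pass from $aR(bc)=0$ to $(bc)Ra=0$. The only cosmetic difference is that the paper phrases each step as an ``if and only if'' (since $ar(bc)=r_0(abc)$ gives both directions at once), whereas you establish a single cyclic implication and iterate it; the content is the same.
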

\begin{proof}
Let $r\in R$. Then $r=r_0+r_1$ for some $r_0\in F$ and $r_1\in J$. Since $J^4=0$ and $F$ is prime, $ar(bc)=ar_0(bc)=r_0(abc)$. Therefore, $abc=0$ if and only if $aR(bc)=0$. Since the ring is reflexive, then $abc=0$ if and only if $(bc)Ra=0$, if and only if $bca=0$, if and only if $cab=0$.
\end{proof}

In Lemma \ref{lemma_lr}, the idea is that $abc\in J(R)^3$ and the index of nilpotency of $J(R)$ is at most 4. The result can be generalized to say that if $a_1\cdots a_n\in J^K$ where $a_1,\dots,a_n\in R$ and the index of nilpotency of $J(R)$ is at most $K+1$, then $a_1\cdots a_n=0$ if and only if $a_na_1\cdots a_{n-1}=0$.

\section{Small Abelian Reflexive Rings}
\label{sect_main}
The main result of the paper is that a minimal abelian reflexive nonsemicommutative ring is of order 256. In this section it is shown that a local reflexive ring of order $p^k$ with $k<8$ is semicommutative and therefore reversible. This then implies that a local reflexive ring of an order less than $256$ is semicommutative. To that end, the following five results are given which will ultimately be used to show the main theorem of this section.

\begin{proposition}
\label{prop_j3}
Let $R$ be a finite local ring, $J=J(R)$ and $F=R/J$ where $F$ is prime. Assume $J^3=0$. Then $R$ is semicommutative.
\end{proposition}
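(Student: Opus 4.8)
The plan is to invoke Lemma \ref{lemma_lrs}, which is tailor-made for exactly this situation. Since $F = R/J$ is a prime field, that lemma tells us $R$ is semicommutative if and only if the following weakened condition holds: for every $a, b \in J$ with $ab = 0$, one has $aJb = 0$. So it suffices to verify this condition under the hypothesis $J^3 = 0$.

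But the verification is immediate. For any $a, b \in J$, since $J$ is a (two-sided) ideal we have $aJb \subseteq J \cdot J \cdot J = J^3$, and by hypothesis $J^3 = 0$. Hence $aJb = 0$ for \emph{all} $a, b \in J$ — in particular whenever $ab = 0$ — so the condition of Lemma \ref{lemma_lrs} is satisfied and $R$ is semicommutative.

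In short, there is essentially no obstacle here: the content is entirely front-loaded into the reduction provided by Lemma \ref{lemma_lrs} (whose own proof is where the primeness of $F$ is really used, to absorb the scalar part of an arbitrary element of $R$), after which the inclusion $aJb \subseteq J^3 = 0$ does all the work. The only thing one should double-check is that no reflexivity or other hypothesis is secretly needed; it is not, since semicommutativity is a statement about $aRb$ for $ab = 0$, and writing $r = r_0 + r_1$ with $r_1 \in J$ lets one discard the $r_1$ contribution via $J^3 = 0$ and the $r_0$ contribution via primeness of $F$, which is precisely what Lemma \ref{lemma_lrs} packages.
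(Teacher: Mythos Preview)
Your proof is correct and follows exactly the same route as the paper's: apply Lemma~\ref{lemma_lrs} and observe that $aJb\subseteq J^3=0$ for all $a,b\in J$. The paper's proof is just the one-line version of what you wrote.
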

\begin{proof}
Let $a,b\in J$. Then $aJb=0$. The result follows from Lemma \ref{lemma_lrs}.
\end{proof}

In Theorem 3.2 of \cite{szabo_2019_3} it was shown that
\[
R={\F_2\id{u,v}\over\id{u^3,v^2,vu,u^2-uv}}
\]
is a minimal nonreflexive semicommutative nonduo ring. Note that $R$ is a finite local ring such that $R/J(R)$ is prime and $J(R)^3=0$. This shows that Proposition \ref{prop_j3} can not be strengthened to conclude the ring is reflexive and therefore reversible.

\begin{proposition}
\label{prop_31s}
Let $R$ be a finite local ring, $J=J(R)$ and $F=R/J$ where $F$ is prime. Assume $\dim_F(J/J^2)\leq 3$ and $\dim_F(J^2/J^3)=1$. Then $R$ is semicommutative.
\end{proposition}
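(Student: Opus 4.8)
The plan is to begin from Lemma~\ref{lemma_lrs}, which reduces the task to showing that whenever $a,b\in J$ satisfy $ab=0$ one has $aJb=0$, and then to stratify the argument by the $J$-adic degrees of $a$ and $b$. If $J^3=0$ the claim is Proposition~\ref{prop_j3}, so I would assume $J^3\neq 0$. Since $\dim_F(J^2/J^3)=1$, Nakayama's lemma shows $J^2$ is principal, so Lemma~\ref{lemma_x1} (with $n=2$) supplies $u\in J\setminus J^2$ with $J^l=\langle u^l\rangle$ for all $l\ge 2$; applying Nakayama to each layer gives $J^l=Ru^l=u^lR$ and $\dim_F(J^l/J^{l+1})=1$ for $2\le l\le K$, where $K+1$ is the index of nilpotency of $J$, so in particular each $u^l$ with $l\ge 2$ is normal. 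If $\dim_F(J/J^2)=1$ then $J$ is principal, $R$ is a chain ring by Lemma~\ref{lemma_chain}, hence symmetric by Proposition~\ref{lemma_chainsym}, and we are done; so I would assume $\dim_F(J/J^2)\in\{2,3\}$ and fix a minimal generating set $\{u,u_2,\dots,u_{d_1}\}$ of $J$ through $u$.

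Next I would treat the case where $a$ or $b$ lies in $J^2$. If $a\in J^{\alpha}\setminus J^{\alpha+1}$ with $\alpha\ge 2$, write $a=ru^{\alpha}$ with $r$ a unit (possible since $J^{\alpha}=Ru^{\alpha}$); then $ab=0$ is equivalent to $u^{\alpha}b=0$, and since $u^{\alpha}$ is normal the right annihilator $\{z\in R:u^{\alpha}z=0\}$ is a two-sided ideal, so it contains $xb$ for all $x$, giving $u^{\alpha}xb=0$ and hence $axb=0$. The mirror computation (using $\{z:zu^{\beta}=0\}$) handles $b\in J^{\beta}\setminus J^{\beta+1}$ with $\beta\ge 2$. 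This leaves the main case $a,b\in J\setminus J^2$.

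Here I would pass to the associated graded ring $\operatorname{gr}(R)=\bigoplus_i J^i/J^{i+1}$. Its components in degrees $\ge 2$ are one-dimensional, and since $J^3\neq 0$ the degree-three component is nonzero; expanding the associativity identity $(\bar v\bar w)\bar z=\bar v(\bar w\bar z)$ for $\bar v,\bar w,\bar z\in V:=J/J^2$ then forces $\operatorname{gr}(R)$ to be commutative and forces the degree-one product $B\colon V\times V\to J^2/J^3\cong F$ to have the form $B(\bar v,\bar w)=\lambda(\bar v)\lambda(\bar w)$ for a linear functional $\lambda$ with $\lambda(\bar u)=1$. So I may choose $u_2,\dots,u_{d_1}$ with $\lambda(\bar u_k)=0$, which makes $u_kJ\subseteq J^3$ and $Ju_k\subseteq J^3$. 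From $ab=0$ one gets $B(\bar a,\bar b)=0$, so $\lambda(\bar a)=0$ or $\lambda(\bar b)=0$; replacing $u$ by a unit multiple of $a$ when $\lambda(\bar a)\neq 0$ and using that $R$ is semicommutative iff $R^{\mathrm{op}}$ is, I would reduce to $\lambda(\bar a)=0$, so that $aJ\subseteq J^3$. Running the argument by induction on $|R|$, the quotients $R/J^m$ satisfy the hypotheses (with the same residue field), hence are semicommutative by the inductive hypothesis, which gives $axb\in J^K$ for every $x$. It remains to check that the single coefficient of $axb$ in $J^K/J^{K+1}\cong F$ vanishes, and this I would get from a direct computation with the structure constants of $R$ in low degrees, invoking associativity relations such as $u_k^2u=u_k(u_ku)$ and $(uu_k)u=u(u_ku)$ which push products among the $u_k$ deep into the filtration; the bound $d_1\le 3$ keeps this bookkeeping finite.

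The step I expect to be the main obstacle is precisely this last one — turning the ``commutative graded ring with a rank-one degree-one form'' picture into an actual proof that $aJb=0$. A crude degree count only yields $axb\in J^4$, so the real content lies in combining the surviving structure-constant relations with the cancellation afforded by a unit coefficient (when $\lambda(\bar b)\neq 0$) to push $axb$ all the way into $J^{K+1}=0$.
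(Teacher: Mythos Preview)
Your setup through the associated graded ring is correct: associativity in degree three does force the bilinear form $B\colon V\times V\to J^2/J^3$ to be the rank-one symmetric form $B(\bar v,\bar w)=\lambda(\bar v)\lambda(\bar w)$, and the normality argument for the case $a\in J^{\ge 2}$ or $b\in J^{\ge 2}$ is fine. The induction on $|R|$ legitimately yields $axb\in J^K$ for all $x\in J$. But, as you yourself flag, the entire difficulty is concentrated in that last layer, and nothing in your outline actually addresses it. Knowing that $\operatorname{gr}(R)$ is commutative only tells you $[J,J]\subseteq J^3$; the induction upgrades the conclusion for the specific pair $(a,b)$ with $ab=0$ to $aJb\subseteq J^K$, but gives no mechanism to annihilate the surviving coefficient in $J^K/J^{K+1}$. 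The vague appeal to ``structure constants in low degrees'' and relations like $u_k^2u=u_k(u_ku)$ does not do it: those relations live in $J^3/J^4$ and say nothing new beyond what the graded picture already gave.

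The paper's proof bypasses this entirely by proving a much stronger commutativity statement directly. Writing $u_iu_1=\sum_{j\ge 2}\alpha_ju_1^j$ and $u_1u_i=\sum_{j\ge 2}\beta_ju_1^j$, one computes $u_1u_iu_1$ two ways (using only that $F$ is prime so scalars are central) to obtain $\alpha_j=\beta_j$ for all $j\le K-1$, i.e.\ $[u_i,u_1]\in J^K$. From this one checks that every $u_1^j$ with $j\ge 2$ commutes with every $u_i$, so $J^2\subseteq Z(R)$, and then that $[u_i,u_j]\in J^K$ for \emph{all} $i,j$. Hence for arbitrary $a,r,b\in J$ one has $[a,r]\in J^K$ and therefore $arb=rab+[a,r]b=rab=0$ whenever $ab=0$. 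This single two-line computation is the whole proof; it replaces both your graded analysis and your induction, and it is exactly the missing ingredient your last paragraph is groping for. If you insert it, the rest of your scaffolding becomes unnecessary.
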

\begin{proof}
Let $K+1$ be the index of nilpotency of $J$. If $J^3=0$, by Proposition \ref{prop_j3} $R$ is semicommutative. Assume $J^3\neq0$. Then by Lemma \ref{lemma_x1}, since $\dim_F(J^2/J^3)=1$ there exists $u_1\in J\setminus J^2$ such that for $k>1$, $J^k=Fu_1^k+J^{k+1}$. Furthermore, $J=Fu_1+Fu_2+Fu_3+J^2$ for some $u_2,u_3\in J\setminus J^2$.

Now, $u_iu_1=\sum_{j=2}^K\alpha_j u_1^j$ and $u_1u_i=\sum_{j=2}^K\beta_j u_1^j$ for some $\alpha_j,\beta_j\in F$. Since $F$ is a prime field,
\[
\sum_{j=2}^{K-1}\beta_j u_1^{j+1}=u_1u_iu_1=\sum_{j=2}^{K-1}\alpha_j u_1^{j+1}\\
\]
So, for $2\leq j\leq K-1$, $\alpha_j=\beta_j$ showing $u_iu_1=u_1u_i+t_i$ for some $t_i\in J^K$. Now, $u_2u_3=\sum_{j=2}^K\gamma_j u_1^j$ for some $\gamma_j\in F$. So, $u_iu_2u_3=u_2u_3u_i$ for any $i$. This shows that given any 3-rd degree monomial in $u_1$, $u_2$ and $u_3$, any permutation of the product is the same.

By Lemma \ref{lemma_lrs}, it only needs to be shown that for any $a,b\in J$ such that $ab=0$, $aJb=0$. Let $a,r,b\in J$ such that $ab=0$. By the property on the permutation of triple products of $u_1$, $u_2$ and $u_3$ and the fact that $F$ is prime, $arb=rab=0$. Hence, $R$ is semicommutative.
\end{proof}

The ring
\[
R={\F_2\id{u,v}\over\id{u^4,v^2,vu,u^3-uv}}
\]
is a finite local ring such that $R/J(R)$ is prime, $\dim_F(J/J^2)\leq 3$ and $\dim_F(J^2/J^3)=1$. By Proposition \ref{prop_31s}, $R$ is semicommutative. Since $uv\neq 0$ and $vu=0$, it is nonreversible and therefore nonreflexive. This shows that Proposition \ref{prop_31s} can not be strengthened to include reflexivity in its conclusions.

\begin{proposition}
\label{prop_23}
Let $R$ be a finite local reflexive ring, $J=J(R)$ and $F=R/J$ where $F$ is prime. Assume $\dim_F(J/J^2)=2$ and $J^4=0$. Then $R$ is semicommutative.
\end{proposition}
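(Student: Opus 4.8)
\bigskip

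The plan is to reduce the statement to a reversibility question and then dispose of it by a short case analysis built on Propositions~\ref{prop_j3} and \ref{prop_31s} and on the cyclic identity in Lemma~\ref{lemma_lr}. First I would invoke Lemma~\ref{lemma_lrs} (legitimate since $F$ is prime): $R$ is semicommutative exactly when $aJb=0$ for all $a,b\in J$ with $ab=0$. If $a\in J^2$ or $b\in J^2$ then $aJb\subseteq J^4=0$ already, so one may assume $a,b\in J\setminus J^2$. For such $a,b$ with $ab=0$, Lemma~\ref{lemma_lr} applied to $abr=0$ ($r\in J$) gives $bra=0$, hence $bJa=0$; writing a general element of $R$ as a central integer multiple of $1$ plus an element of $J$, one gets $aRb=aJb$ and $bRa=0\iff ba=0$, so reflexivity yields $aJb=aRb=0\iff bRa=0\iff ba=0$. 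Thus it suffices to prove that $ab=0$ implies $ba=0$ for $a,b\in J\setminus J^2$.

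Next I would clear away the easy configurations. If $J^3=0$, Proposition~\ref{prop_j3} applies; if $J^3\neq 0$ and $\dim_F(J^2/J^3)\leq 1$, then $\dim_F(J/J^2)=2\leq 3$ puts us under Proposition~\ref{prop_31s}. Writing $J=Fu_1+Fu_2+J^2$, if $\dim_F(J^2/J^3)=4$ the classes $\overline{u_1^2},\overline{u_1u_2},\overline{u_2u_1},\overline{u_2^2}$ are $F$-independent, and then $ab=0$ forces every product $\alpha_i\beta_j$ of the coordinates of $\bar a,\bar b$ in $J/J^2$ to vanish, which is impossible for $a,b\in J\setminus J^2$; so this case is vacuous. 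This leaves $J^3\neq 0$ and $\dim_F(J^2/J^3)\in\{2,3\}$. Now fix $a,b\in J\setminus J^2$ with $ab=0$ and split on whether their images in $J/J^2$ are independent. If they are, take $u_1=a$ and $u_2=b$; then $u_1u_2=0$, so $u_1^2u_2=u_1(u_1u_2)=0$ and $u_1u_2^2=(u_1u_2)u_2=0$, whence $u_1su_2=0$ for every $s\in J$ (writing $s=\alpha u_1+\beta u_2+s'$ with $s'\in J^2$, the terms $\alpha u_1^2u_2$, $\beta u_1u_2^2$ vanish and $u_1s'u_2\in J^4=0$); thus $u_1Ru_2=0$, reflexivity gives $u_2Ru_1=0$, and $ba=u_2u_1=0$.

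If instead the images of $a,b$ are dependent, then, since $F=\F_p$, after replacing $b$ by a suitable nonzero scalar multiple I may assume $b=a+c$ with $c\in J^2$. Now $ab=a^2+ac=0$ shows $a^2\in J^3$, so with $u_1=a$ and $u_2$ completing a basis of $J/J^2$, the relation $u_1^2\in J^3$ together with $J^4=0$ and Lemma~\ref{lemma_lr} forces $u_1^3=u_1^2u_2=u_1u_2u_1=u_2u_1^2=0$. Writing $c\equiv c_{12}u_1u_2+c_{21}u_2u_1+c_{22}u_2^2\pmod{J^3}$ (the $u_1^2$-term being absorbed into $J^3$) and multiplying, all terms but one die: $ac=c_{22}\,u_1u_2^2$ and $ca=c_{22}\,u_2^2u_1$, so since $a^2=-ac$ one gets $ba=a^2+ca=c_{22}\,(u_2^2u_1-u_1u_2^2)$. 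If $c_{22}=0$ this is $0$; and if $u_1u_2^2=0$, then reflexivity applied to the pair $(u_1,u_2^2)$ (equivalently Lemma~\ref{lemma_lr} on the triple $(u_1,u_2,u_2)$) gives $u_2^2u_1=0$, so again $ba=0$.

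The remaining case is the main obstacle: $c_{22}\neq 0$ and $u_1u_2^2\neq 0$, whence also $u_2^2u_1\neq 0$ and $u_2u_1u_2\neq 0$ (Lemma~\ref{lemma_lr}) and $u_1^2=-c_{22}u_1u_2^2\neq 0$, and here one must prove $u_2^2u_1=u_1u_2^2$. In this situation $\overline{u_1^2}=0$ while $\overline{u_1u_2},\overline{u_2u_1},\overline{u_2^2}$ are all nonzero (otherwise one of $u_1u_2^2,u_2u_1u_2$ would lie in $J^4$), the only possibly nonzero degree-$3$ monomials are $u_1u_2^2,u_2u_1u_2,u_2^2u_1,u_2^3$, and $\dim_F(J^2/J^3)\in\{2,3\}$ pins down the relation(s) among the three nonzero quadratic classes. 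I would finish by running the two subcases, in each combining those relations with reflexivity applied to pairs such as $(u_1,u_2^2)$, $(u_2,u_1u_2)$, $(u_1u_2,u_2)$ and $(u_2u_1,u_2)$, with the cyclic identities of Lemma~\ref{lemma_lr} on the surviving degree-$3$ monomials, and with a dimension count on $J^3$, the goal being to force $u_2^2u_1=u_1u_2^2$ or else to produce a pair that violates reflexivity. The hard parts are that $a$ and $b$ carry genuine $J^3$-tails which have to be tracked through all the products, and that in characteristic $2$ one cannot separate the quadratic relation into symmetric and skew parts, so the bookkeeping must be organized around whether $u_1$ and $u_2$ commute modulo $J^3$.
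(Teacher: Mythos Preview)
Your reduction $aJb=0\Longleftrightarrow ba=0$ (for $a,b\in J\setminus J^2$ with $ab=0$) is valid, and the independent case is handled cleanly. The genuine gap is the dependent subcase with $c_{22}\neq 0$ and $u_1u_2^2\neq 0$: you only sketch a plan (``I would finish by running the two subcases\ldots'') based on a further split on $\dim_F(J^2/J^3)$ and on probing reflexivity for pairs like $(u_1,u_2^2)$, without carrying any of it out, and it is not clear from the sketch how you would actually force $u_2^2u_1=u_1u_2^2$. As written the proof is incomplete.

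The gap closes for free once you stop chasing $ba$ and instead compute $aJb$ directly in the dependent case, exactly as you did in the independent one. From $a^2=-ac\in J^3$ you already noted $u_1^2u_2\in J^4=0$, so Lemma~\ref{lemma_lr} gives $u_1u_2u_1=0$; then with $b=u_1+c$,
\[
au_1b=u_1^2(u_1+c)\in J^4+J^5=0,\qquad au_2b=u_1u_2u_1+u_1u_2c\in 0+J^4=0,
\]
and hence $aJb=0$. So the whole $\dim_F(J^2/J^3)$ case split, the vacuous $\dim=4$ argument, and the unfinished analysis of $c_{22}(u_2^2u_1-u_1u_2^2)$ can all be deleted.

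The paper's own proof is organised differently and avoids reversibility altogether. It fixes $a,r,b\in J\setminus J^2$ with $ab=0$ and exploits that $J/J^2$ is two-dimensional: either $\bar a,\bar r$ are dependent, giving $arb=rab=0$ immediately, or $\{a,r\}$ generate $J$ modulo $J^2$ and one writes $b=\alpha a+\beta r+t$ with $t\in J^2$. The three subcases $\alpha=0$, $\beta=0$, $\alpha\beta\neq 0$ are each settled in a line using $abr=a^2b=aba=0$ together with one application of Lemma~\ref{lemma_lr}. This is shorter than your route and never touches $\dim_F(J^2/J^3)$.
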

\begin{proof}
By Lemma \ref{lemma_lrs}, it only needs to be shown that for any $a,b\in J$ such that $ab=0$, $aJb=0$. Let $a,r,b\in J$ such that $ab=0$. If $a$, $r$ or $b$ is in $J^2$ then $arb=0$ since $J^4=0$. So, assume $a,r,b\in J\setminus J^2$. If $a+J^2=\gamma r+J^2$ for $\gamma\in F$ then $arb=\gamma r^2b=rab=0$. So, assume $J=\id{a,r}$. Then $b=\alpha a+\beta r+t$ where $\alpha,\beta\in F$ and $t\in J^2$. If $\alpha=0$ then $arb=\beta ar^2=abr=0$. By Lemma \ref{lemma_lr}, since $rab=0$, if $\beta=0$ then $arb=\alpha ara=bra=rab=0$. Finally, assume $\alpha,\beta\neq 0$. Then, since $ab=0$,
\begin{align*}
0=abr&=\alpha a^2r+\beta ar^2\\
0=a^2b&=\alpha a^3+\beta a^2r\\
0=aba&=\alpha a^3+\beta ara.
\end{align*}
So, $\beta ar^2=-\alpha a^2r=-\alpha ara$ and so $arb=\alpha ara+\beta ar^2=0$. Hence, $R$ is semicommutative.
\end{proof}

In the previous proposition reflexivity was not superfluous as can be seen by noting the ring
\[
{\F_2\id{u,v}\over\id{u^2,v^2,uvu-vuv}}.
\]
This ring is a finite local nonsemicommutative nonreflexive ring (see \cite{szabo_2019_3}) with a Jacobson radical that is 2-generated having index of nilpotency of 4. This example also shows the necessity of reflexivity in the following theorem.

\begin{proposition}
\label{prop_32}
Let $R$ be a finite local reflexive ring, $J=J(R)$ and $F=R/J$ where $F$ is prime. Assume $\dim_F(J^2/J^3)\leq 2$ and $J^4=0$. Then $R$ is semicommutative.
\end{proposition}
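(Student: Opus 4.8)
Much as in the proof of Proposition~\ref{prop_23}, the plan is to apply Lemma~\ref{lemma_lrs} and reduce to showing that $arb=0$ whenever $a,b,r\in J$ satisfy $ab=0$. If $J^3=0$ this is Proposition~\ref{prop_j3}, so assume the index of nilpotency of $J$ equals $4$. If any of $a,b,r$ lies in $J^2$, then $arb\in J^4=0$, so one may assume $a,b,r\in J\setminus J^2$. Writing $\bar x$ for the image of $x$ in $J/J^2$, the degenerate configurations are handled directly. If $\bar a,\bar b$ are $F$-linearly dependent, say $a=\gamma b+t$ with $\gamma\in F\setminus\{0\}$ and $t\in J^2$, then $ab=0$ forces $b^2\in J^3$, so $b^2r\in J^4=0$ and $arb=\gamma\,brb=0$ by Lemma~\ref{lemma_lr} (the case $b=\gamma a+t$ is symmetric, via $a^2\in J^3$). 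If $\bar r\in F\bar a+F\bar b$, say $r=\gamma a+\delta b+t$ with $t\in J^2$, then expanding gives $arb=\gamma\,a(ab)+\delta\,(ab)b+atb=0$. Hence it remains to treat the case in which $\bar a,\bar b,\bar r$ are $F$-linearly independent.

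For that case the main tool is linear algebra on $J^2/J^3$, a space of dimension at most $2$. Since $J^4=0$, the maps $\rho_b,\lambda_a\colon J^2/J^3\to J^3$ defined by $\rho_b(\bar w)=wb$ and $\lambda_a(\bar w)=aw$ are well defined and $F$-linear, and since $(za)b=z(ab)=0$ and $a(bz)=(ab)z=0$ for every $z\in J$, one has $\overline{Ja}\subseteq\ker\rho_b$ and $\overline{bJ}\subseteq\ker\lambda_a$, where $\overline{Ja}$ and $\overline{bJ}$ are the images of $Ja$ and $bJ$ in $J^2/J^3$ (I write $\overline{xy}$ for images in $J^2/J^3$ henceforth). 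Several quick exits then appear: if $ba\in J^3$, then $bar\in J^4=0$ and $arb=0$ by Lemma~\ref{lemma_lr}; if $\overline{ar}\in\overline{Ja}$, then $ar=za+t$ with $z\in J$ and $t\in J^3$, so $arb=z(ab)+tb=0$, and symmetrically if $\overline{rb}\in\overline{bJ}$; and if $J^2b=0$ or $aJ^2=0$, then $arb=0$ immediately. In particular, when $\dim_F(J^2/J^3)=1$ one may assume $ba\notin J^3$, so $\overline{ba}$ spans $J^2/J^3=\overline{Ja}$, whence $\overline{ar}\in\overline{Ja}$ and the claim follows.

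This leaves the crux: $\dim_F(J^2/J^3)=2$ and $\overline{ba}\neq0$. After the exits above, one may assume $\overline{Ja}=\overline{bJ}=F\overline{ba}=:L$ (these are proper in $J^2/J^3$, for otherwise $\overline{ar}\in\overline{Ja}$ or $\overline{rb}\in\overline{bJ}$; and they contain $\overline{ba}\neq0$), that $\rho_b\neq0$ and $\lambda_a\neq0$, which forces $\ker\rho_b=\ker\lambda_a=L$, and that $\overline{ar}\notin L$ and $\overline{rb}\notin L$. Since $(J^2/J^3)/L$ is one-dimensional, $\overline{ar}=\mu\,\overline{rb}+\nu\,\overline{ba}$ for some $\mu\in F\setminus\{0\}$ and $\nu\in F$, which lifts to $ar=\mu\,rb+\nu\,ba+t$ with $t\in J^3$. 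Because $ra\in Ja$, one has $\overline{ra}\in L=F\overline{ba}$, say $ra=\kappa\,ba+s$ with $s\in J^3$, so $ara=a(ra)=\kappa\,(ab)a+as=0$, whence $aar=0$ by Lemma~\ref{lemma_lr}. On the other hand $aar=a(ar)=\mu\,arb+\nu\,(ab)a+at=\mu\,arb$. Thus $\mu\,arb=0$, and since $\mu$ is a unit of the field $F$, $arb=0$. By Lemma~\ref{lemma_lrs}, $R$ is semicommutative.

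The one genuinely delicate step is this last case with $\dim_F(J^2/J^3)=2$: recognizing $\rho_b$ and $\lambda_a$ as the right well-defined operators on $J^2/J^3$, observing that two-dimensionality collapses every pertinent degree-two product into the single line $L=F\overline{ba}$ once the easy exits are removed, and then pulling the identity $aar=\mu\,arb$ out of associativity, $J^4=0$, and the cyclic invariance of Lemma~\ref{lemma_lr}. The reductions in the first two paragraphs are routine by comparison.
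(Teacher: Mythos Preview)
Your proof is correct and follows essentially the same strategy as the paper's: reduce via Lemma~\ref{lemma_lrs} to $a,r,b\in J\setminus J^2$ with $ab=0$, use Lemma~\ref{lemma_lr} for cyclic invariance, extract a linear relation among $\overline{ar}$, $\overline{rb}$, $\overline{ba}$ in the at-most-two-dimensional space $J^2/J^3$, and left-multiply by $a$ to isolate $arb$. Your packaging through the subspaces $\overline{Ja},\overline{bJ}$ is a tidier organization than the paper's direct casework on $\overline{ra}$ versus $\overline{ar}$ (your observation $\overline{ra}\in\overline{Ja}=F\,\overline{ba}$, hence $ara=0$, replaces the paper's three-way split in one stroke), but the underlying computation is the same; note also that your preliminary reduction on linear dependence of $\bar a,\bar b,\bar r$ in $J/J^2$ is already absorbed by your later exits and could be omitted.
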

\begin{proof}
By Lemma \ref{lemma_lrs}, it only needs to be shown that for any $a,b\in J$ such that $ab=0$, $aJb=0$. Note that Lemma \ref{lemma_lr} will be needed frequently throughout this proof so it will be used freely. Let $a,r,b\in J$ such that $ab=0$. If $a$, $r$ or $b$ is in $J^2$ then $arb=0$ since $J^4=0$. So, assume $a,r,b\in J\setminus J^2$. If $ar\in J^3$, $rb\in J^3$ or $ba\in J^3$ then it can be shown that $arb=0$. Assume, $ar,rb,ba\in J^2\setminus J^3$. If $ar+J^3$ or $rb+J^3$ is a nonzero scalar multiple of $ba+J^3$ then $arb=0$. So, assume neither is the case. Then $\dim_F(J^2/J^3)=2$, there exists $\alpha,\beta\in F\setminus\{0\}$ and $\gamma\in F$ such that $\alpha ar+\beta rb+\gamma ba\in J^3$. If $ra\in J^3$ then $0=ara=a^2r$ showing $\beta arb=\alpha a^2r+\beta arb+\gamma aba=a(\alpha ar+\beta rb+\gamma ba)=0$. Since $\beta\neq 0$, $arb=0$ in this case. So assume $ra\in J^2\setminus J^3$. If $ra+J^2=ar+J^2$ then $arb=rab=0$. Assume $J^2=Fra+Far+J^3$. So, $ba+J^2=\delta(ra+J^2)+\zeta(ar+J^2)$ for $\delta,\zeta\in F$. Since the case when $\delta=0$ has already been considered, assume $\delta\neq 0$. Then $0=bab=\delta rab+\zeta arb=\zeta arb$. If $\zeta\neq 0$ then $arb=0$. Finally, assume $\zeta=0$. Then $ba+J^2=\delta ra+J^2$. Then $0=\delta^{-1}aba=ara$ showing $0=(\alpha ar+\beta rb+\gamma ba)a=\beta rba=\beta arb$. Since $\beta\neq 0$, $arb=0$ in this case. Hence, $R$ is semicommutative.
\end{proof}

\begin{proposition}
\label{prop_2211}
Let $R$ be a finite local reflexive ring, $J=J(R)$ and $F=R/J$ where $F$ is prime. Assume $\dim_F(J/J^2)=2$, $\dim_F(J^2/J^3)=2$, $\dim_F(J^3/J^4)=1$, $J^4\neq 0$ and $J^5=0$. Then $R$ is semicommutative.
\end{proposition}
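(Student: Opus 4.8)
The plan is to follow the template of Propositions~\ref{prop_23} and~\ref{prop_32}. By Lemma~\ref{lemma_lrs}, since $F$ is prime it suffices to prove that whenever $a,b\in J$ with $ab=0$, one has $arb=0$ for every $r\in J$. Throughout, I would use freely the generalized cyclic-permutation principle from the remark following Lemma~\ref{lemma_lr}: since the index of nilpotency of $J$ equals $5$, any product of elements of $R$ that lies in $J^4$ is zero if and only if each of its cyclic rotations is zero; in particular any triple product one of whose factors lies in $J^2$ (hence which lies in $J^4$) may be cyclically permuted.

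First I would pin down the structure of $J$. Since $\dim_F(J^3/J^4)=1$, the ideal $J^3$ is principal, and since $J^4\neq0$, Lemma~\ref{lemma_x1} with $n=3$ produces $u\in J\setminus J^2$ with $J^\ell=\langle u^\ell\rangle$ for every $\ell\geq 3$; thus $J^3=\langle u^3\rangle$ and $J^4=\langle u^4\rangle$. Because $J\cdot J^4=J^4\cdot J=J^5=0$ and $F$ is prime, expanding any two-sided combination of $u^4$ collapses every cross term into $J^5$, so $J^4=Fu^4$ is one-dimensional; in particular $\dim_F R=7$, i.e.\ $|R|=|F|^7$. Fix $v\in J\setminus J^2$ with $J=Fu+Fv+J^2$, and record the immediate consequences of $ab=0$ that will be used: $a^2b=aba=abr=rab=0$ for all $r$, together with the cyclic rotations of any of these that happen to lie in $J^4$.

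The proof is then a case analysis on the positions of $a$, $r$, $b$ relative to $J^2$, exactly as in the proofs of Propositions~\ref{prop_23} and~\ref{prop_32}. If at least one of $a,r,b$ lies in $J^2$, then $arb\in J^4$, and one reduces $arb$ through cyclic rotations, and through reflexivity applied to triple products sitting in $J^3$ or $J^4$, to a form in which the relation $ab=0$ (or $ba=0$, which reflexivity extracts from it) can be read off. If $a,r,b\in J\setminus J^2$, one first treats the subcases in which two of $a+J^2,r+J^2,b+J^2$ are $F$-proportional---these collapse, modulo a $J^4$-correction that is then handled by the cyclic principle, to products with a repeated factor as in Proposition~\ref{prop_23}---so that one may assume $J=\langle a,r\rangle$ and write $b=\alpha a+\beta r+t$ with $\alpha,\beta\in F$ and $t\in J^2$. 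Multiplying $ab=0$ on the left and on the right by $a$ and by $r$ yields a system of identities among the cubic expressions $a^3,a^2r,ara,ar^2,\dots$ modulo $J^4$; combining these (the upshot is an equation of the shape $\beta\cdot arb=\alpha\,a[a,t]+\beta\,a[r,t]\in J^4$) forces the degree-$3$ component of $arb$ to vanish, after which reflexivity together with the cyclic principle and the one-dimensionality of $J^4$ closes the case.

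The hard part is this last subcase. In Proposition~\ref{prop_32} the hypothesis $J^4=0$ made \emph{every} triple product cyclically permutable, and that is what drove the argument there; here a triple product of elements of $J\setminus J^2$ lies only in $J^3$, so one cannot permute it until the coefficient of $u^3$ has been killed, and then one still has to kill the surviving element of $J^4$. Doing both requires carefully cataloguing which cubic monomials in the two generators can occur---using that $\dim_F(J/J^2)=\dim_F(J^2/J^3)=2$ forces two independent relations in degree $2$ and $\dim_F(J^3/J^4)=1$ forces a further relation in degree $3$---and then checking that the identities coming from $ab=0$ and from reflexivity annihilate precisely the monomial $u^3$ and then the residual multiple of $u^4$. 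As with the earlier propositions, the real cost is the bookkeeping of this case split rather than any new idea.
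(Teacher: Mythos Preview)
Your approach is genuinely different from the paper's, and as written it has a real gap.

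The paper does \emph{not} chase $a,r,b$ directly. Instead it fixes generators $u,v$ of $J$ with $J^k=\langle u^k\rangle$ for $k\ge 3$, normalizes so that $vu^3=0$, and then uses reflexivity repeatedly to establish the structural fact that $u$ and $v$ each commute with every element of $J^2$; from this, any triple product of elements of $J$ is invariant under all permutations, so $arb=rab=0$ whenever $ab=0$. The crucial step is a proof by contradiction: in the one residual configuration where commutation of $v$ with $J^2$ is not forced (namely $uvu=0$, $vuv\ne 0$, $v^2\in J^3$ with a specific coefficient $\alpha_1\ne 1$), the paper exhibits explicit elements $a=uv+u^3$, $b=u-\gamma_1^{-1}v$ with $aRb=0$ but $ba\ne 0$, contradicting reflexivity. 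Your sketch has no analogue of this contradiction step.

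The specific problem with your plan is the parenthetical ``$ba=0$, which reflexivity extracts from it.'' Reflexivity says $aRb=0\Rightarrow bRa=0$; it does \emph{not} give $ba=0$ from $ab=0$ alone --- that is reversibility, which is precisely what is being proved. Without $ba=0$ your ``easy'' cases already fail: if, say, $r\in J^2$ then $arb\in J^4$, but the cyclic principle only rotates $arb$ to $rba$ and $bar$, neither of which is visibly zero. The same obstruction reappears in your hard case: your identity $\beta\,arb=-atb\in J^4$ is correct when $\beta\ne 0$, but to kill $atb$ you again need either $ba=0$ or the commutation of $J$ with $J^2$, and you have neither. The ``bookkeeping'' you allude to cannot close the argument without first proving a structural statement equivalent to what the paper proves, and that structural statement genuinely requires the contradiction step.
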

\begin{proof}
By Lemma \ref{lemma_x1}, $J=Fu+Fv+J^2$, $J^2=Fu^2+Fw+J^3$, $J^3=Fu^3+J^4$ and $J^4=Fu^4$ for $u,v\in J\setminus J^2$ and $w\in\{uv,vu,v^2\}$. For some $\lambda\in F$, $vu^3=\lambda u^4$ and so $(v-\lambda u)u^3=0$. Replace $v$ with $v-\lambda u$. Since $F$ is prime and $J^5=0$, $vRu^3=0$. Since $R$ is reflexive, $u^3Rv=0$ and $u^3v=0$. Using the reflexivity of $R$, it can be shown then that any 4-th degree monomial involving $v$ is 0. Then any 3-rd degree monomial involving $v$ is annihilated by both $u$ and $v$ on both the right and the left showing these monomials are in $J^4$. Given $\alpha u^2+\beta uv+\gamma uv+\delta u^2\in J^3$, $u(\alpha u^2+\beta uv+\gamma uv+\delta u^2)\in J^4$ showing $\alpha=0$ since $u^3\in J^3\setminus J^4$. This shows that $uv+J^3$, $vu+J^3$ and $v^2+J^3$ are each multiples of $w+J^3$.

Now, $uvu=\alpha u^4$ for some $\alpha\in F$. So, $u(vu-\alpha u^3)=0$. By reflexivity, $(vu-\alpha u^3)u=0$ and then $vu^2=\alpha u^4=uvu$. Similarly, $u^2v=uvu$ so, $u^2v=uvu=vu^2$. Also, $v^2u=\beta u^4$  for some $\beta\in F$. So, $(v^2-\beta u^3)u=0$. By reflexivity, $u(v^2-\beta u^3)=0$ and then $uv^2=\beta u^4=v^2u$. This shows that $u$ commutes with any element of $J^2$.

If $vu\in J^3$ or $uv\in J^3$ then $vuv=0$ and by reflexivity, $uv^2=v^2u=vuv=0$. Alternatively, if it were assumed that $v^2\in J^2\setminus J^3$, then $w=v^2$. So, $uv=\delta v^2+t$ for some $\delta\in F$ and $t\in J^3$ and then $vuv=\delta v^3=uv^2$. Then by reflexivity again, $vuv=uv^2=v^2u$. If instead $u^2v\neq 0$ then $vuv=\gamma u^2v$ for some $\gamma\in F$, so $v^2u=\gamma vu^2=\gamma u^2v=vuv$. Hence, $v^2u=vuv=uv^2$ in this case as well. Clearly, if $v^2\in J^4$, then $v^2u=vuv=uv^2$. In any of these cases, $v$ commutes with any element of $J^2$. Next, it is shown that if none of these situations arise, the ring is not reflexive.

Assume $uvu=0$, $vuv\neq 0$, $J^2=\id{u^2,uv}$, $vu=\alpha_1uv+\alpha_2u^3+\alpha_3u^4$ and $v^2=\gamma_1u^3+\gamma_2u^4$ where $\alpha_1\neq 0$ and $\gamma_1\neq 0$. Then $0=uvu=\alpha_1u^2v+\alpha_2u^4+\alpha_3u^5=\alpha_2u^4$, so, $\alpha_2=0$. Then $vu=\alpha_1uv+\alpha_3u^4$ and $vuv=\alpha_1uv^2=\alpha_1^2vuv$. So, $\alpha_1^2=1$. If $\alpha_1=1$, then $vuv=uv^2$ so assume $\alpha_1\neq 1$ as well. It will be shown in this case that $R$ is not reflexive. Let $a=uv+u^3$ and $b=u-\gamma_1^{-1}v$. Let $r\in R$ where $r=r_0+r_1$ with $r_0\in F$ and $r_1\in J$. Since $aJ\subset J^4$,
\begin{eqnarray*}
arb&=&ar_0b+ar_1b=ar_0b=(uv+u^3)r_0(u-\gamma_1^{-1}v)\\
&=&r_0(u^4-\gamma_1^{-1}uv^2)=r_0(u^4-\gamma_1^{-1}\gamma_1u^4)=r_0(u^4-u^4)\\
&=&0
\end{eqnarray*}
but
\begin{eqnarray*}
ba&=&(u-\gamma_1^{-1}v)(uv+u^3)=(u^4-\gamma_1^{-1}vuv)(u^4-\gamma_1^{-1}\alpha_1\gamma_1u^4)\\
&=&(u^4-\alpha_1u^4)=(1-\alpha_1)u^4\\
&\neq&0.
\end{eqnarray*}
Since $R$ is reflexive, this is not true. Therefore, in any case, $u$ and $v$ commute with the elements of $J^2$. So, any product of three elements from $J$ is equal to any permutation of the product. Thus, by Lemma \ref{lemma_lr}, $R$ is semicommutative.
\end{proof}

The five results in the section can be collected in the following way. Let $R$ be a finite local ring, $J=J(R)$ and $F=R/J$ where $F$ is prime. If any of the following are true then $R$ is semicommutative.
\begin{itemize}
\item $J^3=0$.
\item $\dim_F(J/J^2)\leq 3$ and $\dim_F(J^2/J^3)=1$.
\item $R$ is reflexive, $\dim_F(J/J^2)=2$ and $J^4=0$.
\item $R$ is reflexive, $\dim_F(J^2/J^3)\leq 2$ and $J^4=0$.
\item $R$ is reflexive, $\dim_F(J/J^2)=2$, $\dim_F(J^2/J^3)=2$, $\dim_F(J^3/J^4)=1$, $J^4\neq 0$ and $J^5=0$.
\end{itemize}

\begin{theorem}
\label{theo_main}
A finite abelian reflexive ring of order $p^k$ for some prime $p$ and $k<8$ is reversible.
\end{theorem}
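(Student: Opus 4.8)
The plan is to reduce to a finite local ring with prime residue field and then run a short case analysis driven by the five propositions above. Recall from Section~\ref{sect_prelim} that a ring is reversible precisely when it is reflexive and semicommutative, so it is enough to prove that a finite abelian reflexive ring $R$ with $|R|=p^k$, $k<8$, is semicommutative. Being finite, $R$ has an identity that is a sum of orthogonal primitive idempotents, and these are central since $R$ is abelian, so $R=R_1\times\cdots\times R_n$ as rings, where each $R_i$ has no nontrivial idempotent and is therefore a finite local ring of order a power of $p$. Each $R_i$ is a direct factor of $R$ cut out by a central idempotent, hence is again reflexive; and a direct product of semicommutative rings is semicommutative. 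Thus it suffices to treat a finite local reflexive ring $R$ with $J=J(R)$, $F=R/J$, $|R|=p^k$ and $k\le 7$.

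Two degenerate cases are immediate. If $J^2=0$, then $aRb\subseteq Jb\subseteq J^2=0$ for all $a,b\in J$, so $R$ is semicommutative. If $\dim_F(J/J^2)=1$, then $J$ is principal, so $R$ is a finite chain ring by Lemma~\ref{lemma_chain}, hence duo and symmetric by Proposition~\ref{lemma_chainsym}, so reversible and in particular semicommutative. Next, if $F$ is not a prime field, write $|F|=p^m$ with $m\ge 2$ and set $d_i=\dim_F(J^i/J^{i+1})$, so that $k=m\bigl(1+\sum_i d_i\bigr)$: if $\sum_i d_i=0$ then $R=F$ is commutative; if $d_1=1$ the previous case applies; and if $d_1\ge 2$, then $1+\sum_i d_i\ge 3$ together with $m\ge 2$ and $k\le 7$ forces $m=2$ and $\sum_i d_i=2=d_1$, i.e.\ $J^2=0$, already handled. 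So from now on $F$ is a prime field, $d_1\ge 2$, and $\sum_{i\ge 1}d_i=k-1\le 6$.

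It remains to verify that every radical profile $(d_1,d_2,\dots)$ with $d_1\ge 2$ and $\sum_i d_i\le 6$ falls under one of Propositions~\ref{prop_j3},~\ref{prop_31s},~\ref{prop_23},~\ref{prop_32},~\ref{prop_2211}. If $J^3=0$, apply Proposition~\ref{prop_j3}. Otherwise $d_1,d_2,d_3\ge 1$. Suppose first that $\dim_F(J^2/J^3)=1$; since $d_3\ge 1$ this gives $d_1\le 6-1-1=4$, and Proposition~\ref{prop_31s} applies when $d_1\le 3$, while $d_1=4$ forces the profile $(4,1,1)$ with $J^4=0$, covered by Proposition~\ref{prop_32}. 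Suppose instead that $\dim_F(J^2/J^3)\ge 2$. If $J^4=0$, then $d_1\ge 2$, $d_2\ge 2$, $d_3\ge 1$ and $\sum_i d_i\le 6$ leave only $d_1\in\{2,3\}$: Proposition~\ref{prop_23} handles $d_1=2$, and $d_1=3$ forces $(3,2,1)$, covered by Proposition~\ref{prop_32}. If $J^4\ne 0$, then $d_1\ge 2$, $d_2\ge 2$, $d_3\ge 1$, $d_4\ge 1$ and $\sum_i d_i\le 6$ force exactly $(d_1,d_2,d_3,d_4)=(2,2,1,1)$ with $J^5=0$, which is precisely the hypothesis of Proposition~\ref{prop_2211}. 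This exhausts all cases.

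The heavy lifting is all in the five propositions, so within this outline the only genuine obstacle is making the reductions airtight: that a finite abelian ring splits into local factors each of which stays reflexive, that a non-prime residue field can only occur with $J^2=0$ once $k<8$ is imposed, and that each surviving radical profile matches exactly the hypotheses of the proposition invoked — in particular that $\dim_F(J/J^2)\le 3$ whenever Proposition~\ref{prop_31s} is used, which is why the $(4,1,1)$ profile is routed through Proposition~\ref{prop_32} instead.
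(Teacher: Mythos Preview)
Your proof is correct and follows essentially the same route as the paper: reduce to a local reflexive ring of prime-power order, dispose of the non-prime residue field and the chain-ring cases, and then eliminate every remaining radical profile $(d_1,d_2,\dots)$ with $\sum d_i\le 6$ using Propositions~\ref{prop_j3}--\ref{prop_2211}. The only differences are organizational: you argue directly rather than by contrapositive, and your treatment of the reduction to local summands and of the non-prime residue field is somewhat more explicit than the paper's.
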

\begin{proof}
Let $R$ be a finite local reflexive nonsemicommutative ring of order $p^l$ for some prime $p$, $J=J(R)$, $K+1$ be the index of nilpotency of $J$ and $F=R/J$. Since $R$ is finite and local, by Lemma \ref{lemma_vspace},  $F$ is a field and $|R|=\left|F\right|^r$ for some $r$. Since $R$ is nonsemicommutative it is nonsymmetric. Then by Proposition \ref{lemma_chainsym}, $R$ is not a chain ring. Then $r>2$. Now, $r\neq 3$, because otherwise since $R$ is not a chain ring, $J^2=0$ meaning $R$ is semicommutative which it is not. So, $r\geq 4$. Then $|R|\geq|F|^4$. If $|F|$ is not prime, $l\geq 8$. So, assume $|F|$ is prime. By Proposition \ref{prop_j3}, $J^3\neq 0$. By Proposition \ref{prop_31s}, if $\dim_F(J/J^2)\leq 3$ then $\dim_F(J^2/J^3)>1$. Let $d_i=\dim_F(J^i/J^{i+1})$ and $D=(d_1,d_2,\dots,d_K)$. If $l<8$, then the only possibilities for $D$ are $(2,2,1)$, $(2,2,2)$, $(2,3,1)$, $(3,2,1)$, $(4,1,1)$ and $(2,2,1,1)$. By Proposition \ref{prop_23}, $D\notin\{(2,2,1),(2,2,2),(2,3,1)\}$. By Proposition \ref{prop_32}, $D\notin\{(3,2,1),(4,1,1)\}$. Finally, by Proposition \ref{prop_2211}, $D\neq(2,2,1,1)$. Hence, $l\geq 8$. Since an abelian reflexive semicommutative ring is a direct sum of local reflexive semicommutative rings, any finite abelian reflexive ring of order $p^k$ for some prime $p$ and $k<8$ is semicommutative. Since reflexive semicommutative rings are reversible, the result follows.
\end{proof}

\section{Minimal Abelian Reflexive Nonsemicommutative Rings}
\label{sect_main2}
\begin{example}
\label{ex_f2d8}
Let $R=\F_2D_8$ where $D_8=\id{r,s|r^4=s^2=(sr)^2=1}$. Let $B=\{1+a|a\in D_8\setminus\{1\}\}$. Then $\{1\}\cup B$ is an $F$-basis for $R$. It can be shown that $\id{B}$ is an ideal of $R$ and that $\id{B}$ is nil. Then any element outside $\id{B}$ is a unit showing $R$ is local. Next, notice $(1+rs)(r+s)=0$ but $(1+rs)s(r+s)\neq0$ so $R$ is not semicommutative. It is now shown that $R$ is reflexive.

Let $S$ be the subspace of $R$ with basis $\{1,r,r^2,r^3\}$ and define $\bar{~}:S\to S$ via $\bar{~}:\alpha_0+\alpha_1r+\alpha_2r^2+\alpha_3r^3\mapsto\alpha_0+\alpha_3r+\alpha_2r^2+\alpha_1r^3$ which is clearly an automorphism on $S$. Let $a=a_1+a_2s,b=b_1+b_2s\in S+Ss=R$. Assume $aRb=0$. So,
\[
0=ab=(a_1+a_2s)(b_1+b_2s)=(a_1b_1+a_2\bar{b_2})+(a_1b_2+a_2\bar{b_1})s
\]
and
\[
0=arb=(a_1+a_2s)r(b_1+b_2s)=(a_1rb_1+a_2r^3\bar{b_2})+(a_1rb_2+a_2r^3\bar{b_1})s.
\]
Then $a_1b_1+a_2\bar{b_2}=0$ and $a_1b_1+a_2r^2\bar{b_2}=0$. So, $a_1b_1=a_2\bar{b_2}$ and $a_1b_1=r^2a_1b_1$. From the last result we can deduce that $a_1b_1=\overline{a_1b_1}$. So, $\bar{a_2}b_2=a_1b_1$ and $a_1b_1$ annihilates $1+r^2$. Similarly, it can be shown that $\bar{a_2}b_1=a_1b_2$ and $a_1b_2$ annihilates $1+r^2$. Using that $asb=0$ and $arsb=0$ it can be deduced that $\bar{a_1}b_2=a_2b_1$, $\bar{a_1}b_1=a_2b_2$, and both $a_2b_1$ and $a_2b_2$ annihilate $1+r^2$. Before proceeding note that for $y\in S$, $y+\bar{y}\in\id{1+r^2}$. To finish, let $x=x_1+x_2s\in S+Ss=R$. Then
\begin{eqnarray*}
bxa&=&(b_1+b_2s)(x_1+x_2s)(a_1+a_2s)\\
&=&b_1x_1a_1+b_1x_2\bar{a_2}+b_2\bar{x_1}\bar{a_2}+b_2\bar{x_2}a_1\\
&&+(b_1x_1a_2+b_1x_2\bar{a_1}+b_2\bar{x_1}\bar{a_1}+b_2\bar{x_2}a_2)s\\
&=&b_1a_1(x_1+\bar{x_1})+b_2a_1(x_2+\bar{x_2})\\
&&+(b_1a_2(x_1+\bar{x_1})+b_2a_2(x_2+\bar{x_2}))s\\
&=&0.
\end{eqnarray*}
Hence, $R$ is a local reflexive nonsemicommutative ring of order 256.
\end{example}

By Theorem \ref{theo_main}, any abelian reflexive ring of order less than 256 is semicommutative. So, the ring $\F_2D_8$ of Example \ref{ex_f2d8} is a minimal abelian reflexive nonsemicommutative ring providing the following theorem.

\begin{theorem}
\label{theo_main2}
A minimal abelian reflexive nonsemicommutative ring has order 256 an example of which is $\F_2D_8$.
\end{theorem}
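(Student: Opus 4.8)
The plan is to deduce Theorem \ref{theo_main2} from Theorem \ref{theo_main} together with Example \ref{ex_f2d8}, so the argument naturally splits into a lower bound on the order and a matching example realizing it.

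For the lower bound, I would argue by contradiction: suppose $R$ is an abelian reflexive nonsemicommutative ring with $|R|<256$. Since $R$ is finite, write $R=R_1\oplus\cdots\oplus R_t$ as the direct sum of its $p$-primary components, where each $R_i$ has order $p_i^{k_i}$ for distinct primes $p_i$ and $\prod_i p_i^{k_i}=|R|<256$. The first thing to pin down is that each property in play is inherited by and detected on the direct summands: an idempotent of $R_i$ is an idempotent of $R$, so each $R_i$ is abelian; for $a,b\in R_i$ one has $aR_ib=0$ in $R_i$ if and only if $aRb=0$ in $R$, so each $R_i$ is reflexive; and $R$ fails to be semicommutative precisely when some $R_i$ does. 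Hence some component $R_j$ is an abelian reflexive nonsemicommutative ring of order $p_j^{k_j}$. The arithmetic point is that $p_j^{k_j}<256$ forces $k_j<8$ (indeed $2^8=256$ and $3^5=243$ exhibit the extreme cases). By Theorem \ref{theo_main}, $R_j$ is reversible, and since a reversible ring is reflexive and semicommutative, $R_j$ is semicommutative, contradicting the choice of $R_j$. Therefore no abelian reflexive nonsemicommutative ring of order less than $256$ exists.

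For the matching example I would simply invoke Example \ref{ex_f2d8}, which exhibits $\F_2D_8$ as a local reflexive nonsemicommutative ring of order $256$. Being local, its only idempotents are $0$ and $1$, which are central, so $\F_2D_8$ is abelian; hence it is an abelian reflexive nonsemicommutative ring of order $256$, and together with the lower bound it is a minimal such ring.

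I do not anticipate a real obstacle here, since the theorem is essentially a repackaging of Theorem \ref{theo_main} and Example \ref{ex_f2d8}. The only points deserving explicit mention are the bookkeeping that ``abelian'', ``reflexive'', and ``(non)semicommutative'' pass back and forth between a finite ring and its primary components, and the elementary fact that every prime power below $256$ has exponent at most $7$; these are what allow the single-prime statement of Theorem \ref{theo_main} to cover the general, not necessarily $p$-primary, case.
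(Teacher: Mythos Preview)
Your proposal is correct and matches the paper's own argument: the theorem is deduced directly from Theorem \ref{theo_main} together with Example \ref{ex_f2d8}, using the primary decomposition of a finite ring to reduce to the prime-power case. The only cosmetic difference is that the paper records the primary-decomposition reduction once in the introduction rather than inside the proof, so its proof of Theorem \ref{theo_main2} is a single sentence, whereas you (reasonably) spell out the bookkeeping that abelian, reflexive, and (non)semicommutative pass to and from the $p$-primary components.
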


Next, an example of a minimal abelian reflexive nonsemicommutative ring where the index of nilpotency of the Jacobson radical is 4 is provided. One can see that $\dim_F(J/J^2)=3$, $\dim_F(J^2/J^3)=2$ and $\dim_F(J^3/J^4)=1$. This ring in some sense is just outside the rings described in Propositions \ref{prop_23} and \ref{prop_32}. Furthermore, in Example \ref{ex_f2d8}, the nilpotency of the Jacobson radical is greater than 4 (in $\F_2D_8$, $(1+r),(1+s)\in J(\F_2D_8)$ but $(1+r)^3(1+s)\neq 0$) so, the two rings are non-isomorphic.

\begin{example}
\label{ex2}
Let
\[
R=\RTwo,
\]
$x_0=1$, $x_1=u$, $x_2=v$, $x_3=w$, $x_4=uw$, $x_5=vu$, $x_6=wv$, $x_7=uwv$. Then  $R$ is an $\F_2$ algebra with basis $\{x_0,\dots,x_{7}\}$. Clearly $R$ is local. Since $uv=0$ but $uwv\neq 0$, $R$ is nonsemicommutative. We now show $R$ is reflexive.

Let $a,b\in R$ and assume $aRb=0$. Since $ab=0$, if $a$ or $b$ is a unit, the other is 0 and $bRa=0$. So, assume $a,b\in J(R)$. Then $a=\sum_{i=1}^{7}a_ix_i$ and $b=\sum_{i=1}^{7}b_ix_i$ for some $a_i,b_i\in\F_2$. Since $ab=0$, $aub=0$, $avb=0$ and $awb=0$ it can be shown that $a_1b_2=0$, $a_1b_3=0$, $a_2b_1=0$, $a_2b_3=0$, $a_3b_1=0$, $a_3b_2=0$ and
\[
a_1b_6+a_4b_2+a_2b_4+a_5b_3+a_3b_5+a_6b_1=0.
\]
Let $r\in R$ with $r=\sum_{i=0}^{7}r_ix_i$. Then
\begin{eqnarray*}
bra&=&r_0(b_1a_3uw+b_2a_1vu+b_3a_2wv)+\\
&&r_0(b_1a_6+b_2a_4+b_3a_5+b_4a_2+b_5a_3+b_6a_1)uwv+\\
&&(b_1r_3a_2+b_3r_2a_1+b_2r_1a_3)uwv\\
&=&0.
\end{eqnarray*}
Hence, $R$ is reflexive.
\end{example}

In the previous two examples the residue fields are prime. Next, two examples of minimal abelian reflexive nonsemicommutative rings with non-prime residue fields will be presented.

\begin{example}
\label{ex_1}
Let $\sigma$ be the Frobenius automorphism on $\F_4$ and
\[
R=\ROne.
\]
It will shown that this is a local reflexive nonsemicommutative ring. Let $\alpha\in\F_4\setminus\{0,1\}$. First, $R$ is an 4-dimensional algebra over $\F_4$ with basis $\{1,u,v,u^2\}$ showing $|R| =256$. Furthermore, $J(R)=\id{u,v}$ and $R/J(R)\cong\F_4$, so $R$ is local. Next, since $(u+v)^2=0$ but $(u+v)\alpha(u+v)\neq 0$, $R$ is not semicommutative. Finally, to show $R$ is reflexive, let $a,b\in R$ and assume $aRb=0$. If $a$ or $b$ is a unit, clearly $bRa=0$. Assume $a,b\in J(R)$. Then $a=a_2u+a_3v+a_4u^2$ and $b=b_2u+b_3v+b_4u^2$ for $a_i,b_i\in\F_4$. Now, $aRb=0$, so $0=ab=(a_2b_2+a_3b_3^2)u^2$ and $0=a\alpha b=(a_2\alpha b_2+a_3\alpha^2 b_3^2)u^2$. So, $a_2b_2=a_3b_3^2=0$. Let $r=r_1+r_2u+r_3v+r_4u^2\in R$. Since $J(R)^3=0$, $bra=(b_2r_1a_2+b_3r_1^2a_3^2)u^2=0$. Hence, $R$ is reflexive.
\end{example}

\begin{example}
\label{ex_3}
Let $\alpha\in\F_4\setminus\{0,1\}$. Let $\psi:\F_4[u]\to\F_4[u]$ be the automorphism that maps $u$ to $\alpha u$ and $\alpha$ to $\alpha^2$ and
\[
R=\RThree.
\]
Using analog arguments to those in Example \ref{ex_1}, it can be shown that $R$ is a local reflexive ring which is also a 4-dimensional algebra over $\F_4$, so $|R|=256$. Since $(\alpha^2u+v)^2=0$ but $(\alpha^2u+v)\alpha(\alpha^2u+v)\neq 0$, $R$ is nonsemicommutative.

To see that $R$ is not isomorphic to Example \ref{ex_1}, the orders of the units are explored. Let $r=a+bv\in U(R)$ where $a=a_1+a_2u,b\in{\F_4[u]\over\id{u^2}}$ with $a_1,a_2\in\F_4$. Note $a^2=a_1^2$, $\psi(a^2)=a_1$ and $a_1\neq 0$. Then
\begin{eqnarray*}
r^4=(a+bv)^4&=&a^4+(a^3+a^2\psi(a)+a\psi(a^2)+\psi(a^3))bv\\
&=&a_1^4+(a_1^2a+a_1\psi(a)+a_1a+a_1\psi(a))bv\\
r^6=(a+bv)^6&=&a^6+(a^5+a^4\psi(a)+a^3\psi(a^2)+a^2\psi(a^3)+a\psi(a^4)+\psi(a^5))bv\\
&=&a_1^3+(a_1a+a_1\psi(a)+a_1^3a+a_1^3\psi(a)+a_1^2a+a_1^2\psi(a))bv\\
&=&a_1^3+(a+\psi(a))(a_1+a_1^2+a_1^3)bv.
\end{eqnarray*}
From this it can bee seen that if $a_1=1$ then $r^4=1$ and if $a_1\neq 1$ then $r^6=1$ since $a_1+a_1^2+a_1^3=0$ in this case. So, the order of any unit is at most 6. In Example \ref{ex_1}, the order of $\alpha+u$ is 12. Therefore, $R$ is not isomorphic to the ring in Example \ref{ex_1}.
\end{example}

In \cite{marks_2002}, the ring
\[
{\F_2\id{u,v,w}\over\id{\id{u}^2,\id{v}^2,\id{w}^2,uvw,vwu,wuv}},
\]
was shown to be a 13 dimensional $\F_2$-algebra which is reversible and nonsymmetric. If we take this ring and factor out the ideal generated by $uv$ we obtain the following example which is reflexive and nonsemicommutative.

\begin{example}
Let
\[
R={\F_2\id{u,v,w}\over\id{\id{u}^2,\id{v}^2,\id{w}^2,uv,vwu}},
\]
$x_0=1$, $x_1=u$, $x_2=v$, $x_3=w$, $x_4=uw$, $x_5=vw$, $x_6=vu$, $x_7=wu$, $x_8=wv$, $x_9=uwv$, $x_{10}=wvu$, $x_{11}=vuw$. Then  $R$ is $\F_2$ algebra with basis $\{x_0,\dots,x_{11}\}$. Clearly $R$ is local. Since $uv=0$ but $uwv\neq 0$, $R$ is nonsemicommutative. We now show $R$ is reflexive.

Let $a,b\in R$ and assume $aRb=0$. Since $ab=0$ if $a$ or $b$ is a unit, the other is 0. So assume $a,b\in J(R)$ and $a=\sum_{i=1}^{11}a_ix_i$ and $b=\sum_{i=1}^{11}b_ix_i$ for some $a_i,b_i\in\F_2$. Since $ab=0$,
\[
a_1b_3=0,a_2b_3=0,a_2b_1=0,a_3b_1=0,a_3b_2=0
\]
\[
a_1b_8+a_4b_2=0,a_3b_6+a_8b_1=0,a_2b_4+a_6b_3=0.
\]
and since $awb=0$, $a_1b_2=0$. Let $r\in R$ with $r=\sum_{i=0}^{11}r_ix_i$. Then
\[
b\left(\sum_{i=1}^{11}r_ix_i\right)a=b_1r_3a_2uwv+b_3r_2a_1wvu+b_2r_1a_3vuw=0.
\]
Lastly, we show $br_0a=0$ which is true if $r_0=0$, so assume $r_0=1$. Then
\begin{eqnarray*}
br_0a=ba&=&(b_1a_8+b_4a_2)uwv+(b_3a_6+b_8a_1)wvu+(b_2a_4+b_6a_3)vuw
\end{eqnarray*}
If $a_2=b_1=0$ then $b_1a_8+b_4a_2=0$. If $a_2\neq 0$ then $b_1=b_3=b_4=0$ from the above equations showing $b_1a_8+b_4a_2=0$. If $b_1\neq 0$ then $a_2=a_3=a_8=0$ from the above equations showing $b_1a_8+b_4a_2=0$. A similar analysis shows that $b_3a_6+b_8a_1=0$ and $b_2a_4+b_6a_3=0$, so, $bra=0$. Hence, $R$ is reflexive.
\end{example}

\section{Conclusion}
There are interesting connections between abelian reflexive nonsemicommutative rings studied here and reversible nonsymmetric rings (which are always abelian) studied previously in \cite{marks_2002} and \cite{szabo_2019_4}. This can be seen in the methods used here and those used in \cite{szabo_2019_4} to find the order of a minimal reversible nonsymmetric ring. Alongside the structural details pertaining to finite local rings, another intimate connection that was found here is that the two noncommutative group rings of order 256, namely $\F_2D_8$ and $\F_2Q_8$, are examples of a minimal abelian reflexive nonsemicommutative ring and a minimal reversible nonsymmetric ring, respectively. In \cite{gutan_2004}, reversible group rings were studied. There it was shown that $\F_2Q_8$ is a minimal reversible nonsymmetric group ring. That is of course a consequence of the main result in \cite{szabo_2019_4}. It seems natural then to consider the connections between reflexive group rings and reversible group rings and find out what results in \cite{szabo_2019_4} may pass to abelian reflexive rings.

\bibliographystyle{plain}
\bibliography{../SteveSzaborefs}

\end{document}